\newtheorem{ntn}{Notation}[section]
\newtheorem{lemma}[ntn]{Lemma}
\newtheorem{theorem}[ntn]{Theorem}
\newtheorem{remark}[ntn]{Remark}
\newcommand{\R}{\mathbb{R}}
\newcommand{\eps}{\varepsilon}
\newcommand{\what}{\hat{w}}
\newcommand{\wbar}{\overline{w}}
\begin{document}

\title{Improvable Knapsack Problems\thanks{Partially supported by grants SCHO 1140/3-2 and SCHO 1140/6-3 within the Indo-German DST-DFG Programme.}}

\author[1]{Marc Goerigk\footnote{Corresponding author. Email: m.goerigk@lancaster.ac.uk}}
\author[2]{Yogish Sabharwal}
\author[3]{Anita Sch\"obel}
\author[4]{Sandeep Sen}

\affil[1]{{\small Lancaster University, United Kingdom}}
\affil[2]{{\small IBM Research, Delhi, India}}
\affil[3]{{\small Georg-August-Universit\"at G\"ottingen, Germany}}
\affil[4]{{\small IIT Delhi, India}}

\date{}

\maketitle

\begin{abstract}
We consider a variant of the knapsack problem, where items are available with different possible weights. Using a separate budget for these item improvements, the question is: Which items should be improved to which degree such that the resulting classic knapsack problem yields maximum profit?

We present a detailed analysis for several cases of improvable knapsack problems, presenting constant factor approximation algorithms and two PTAS.
\end{abstract}

{\it Parts of this paper have been published in the extended conference abstract \cite{tamc}.}

\section{Introduction}

We consider an extension of the knapsack problem which allows to use different versions of the same item, where the weight of an item can be reduced. Each such improvement has associated costs, and the total budget that can be spent on improvements in bounded. The problem is to find a choice of improvements, such that the resulting knapsack problem has the maximum possible profit.

The idea of improvable versions of optimization problems is not new in the literature; mostly network problems have been considered so far. To the best of our knowledge, this work is the first to consider the knapsack problem, 
where already the basic problem is NP-hard.

In the following, we briefly summarize the state of research on other improvable problems.

Improvable versions of problems that are originally polynomially solvable are in many cases NP-hard. This motivates the analysis of approximation algorithms. In \cite{krumke2}, several such algorithms are presented for node and edge upgrade strategies for subgraph problems (e.g., minimum spanning tree). Improvable spanning trees have further been studied in \cite{krumke4,krumke5}.

In \cite{krumke6}, improvable network flows are studied. Here, edge capacities may be increased to allow for a better maximum flow in the modified network. They show that for continuous improvements, the problem is polynomially solvable, but becomes NP-hard if an edge can only either be improved or not. Minimum cost flows have been considered in \cite{noltemeier1,noltemeier2}. See also \cite{krumke9,krumke10,zhang2} for more results on network improvement problems.

Further studied improvable problems include location problems \cite{berman1,berman2,zhang4,gassner2}, a multicut problem in directed trees \cite{zhang5}, and bottleneck problems \cite{zhang1,zhang6,zhang7}.

Improvable problems have their roots in trying to better model the decision maker's possible choices. Thus, there are several practical applications of improvable problems in the literature. We refer to railway track upgrading \cite{nachtigall1}, disaster management \cite{grasp} and forest road planning \cite{forest} as examples. 
For the knapsack problem, a project manager might decide to hire temporary staff to reduce the time needed for a task. 
Furthermore, improvable problems appear as subproblems when considering the \textit{query competitiveness} for uncertain optimization problems that allow queries to gain additional information, see \cite{sea}.

{\bf Contributions and outlook.} We provide an overview on the results presented in this paper in Table~\ref{tab:results}. The corresponding problem notation is explained in Section~\ref{sec:general}.
\begin{table}[htbp]
\centering\begin{tabular}{c|c|c}
\multicolumn{2}{c|}{\phantom{improvement}} & \phantom{(linear 3-appr.)}  \\[-3ex]
\multicolumn{2}{c|}{improvement} & result \\[1mm]
\hline
 & & \\[-2ex]
\multirow{4}{*}{single} & continuous & PTAS \\[1mm]
\cline{2-3}
& & \\[-2ex]
 &  & linear 6-appr.  \\
 & discrete & (linear 3-appr.) \\
 & & (poly. 2-appr.)  \\[1mm]
\hline
& &  \\[-2ex]
\multirow{4}{*}{multi} & continuous & open \\[1mm]
\cline{2-3} 
& & \\[-2ex]
 &  & PTAS \\
 & discrete & poly. 4-appr. \\
 & & poly. 3-appr. \\
\end{tabular}
\caption{Results of this paper. Multi-stage results also apply to single-stage problems. Results in brackets ($\cdot$) indicate they hold for special cases.}\label{tab:results}
\end{table}

The remainder of this paper is structured as follows.
We introduce the improvable knapsack problem where several degrees of improvement on item weights are possible in Section~\ref{sec:general}, and discuss notation to differentiate between problem variants. In Sections~\ref{sec:contweights} and \ref{sec:disweight}, we discuss continuous and discrete improvements, respectively. 

We consider the special case of unit improvement costs in Section~\ref{sec:spec}, which admits improved approximation ratios. Section~\ref{sec:conclusion} concludes the paper, discusses extensions of the presented methods to more general improvable knapsack problems, and points to further research questions.

\section{The Improvable Knapsack Problem}
\label{sec:general}

To formalize the improvable knapsack problem, we assume the following setting. Like in the classic knapsack problem, we are given a list of $n$ items $\{1,\ldots,n\}$ with profits $p_i\in\R$ and weights $w_i\in\R$, as well as a budget $B$. For the improvable version, we are furthermore given a list of improved weights $(w_i^1,\ldots,w_i^{j(i)})$ per item $i$ with associated costs $(c_i^1,\ldots,c_i^{j(i)})$ and a budget $C$.
We assume that improved weights are monotonically decreasing, while improvement costs are monotonically increasing. To simplify notation, we set $c^0_i = 0$ for all $i$.

\medskip

The (weight-)improvable knapsack problem (iK) is then given as: For each item, determine the degree of weight improvement, so that the total knapsack profit is maximized, under consideration of the two budgets $B$ and $C$.

\medskip

The problem (iK) can be modeled using the following binary program.
\begin{align}
\max\ & \sum_{i=1}^n p_ix_i \\
\text{s.t. } & \sum_{i=1}^n w_i x_i + \sum_{i=1}^n\sum_{\ell=1}^{j(i)} (w_i^\ell - w_i^{\ell-1}) y_i^\ell \le B \hspace{-5mm}\label{ik:1}\\
& \sum_{i=1}^n\sum_{\ell=1}^{j(i)} (c_i^\ell- c_i^{\ell-1}) y_i^\ell \le C \label{ik:2}\\
& y_i^{j(i)} \le y_i^{j(i)-1}\le \ldots \le y_i^1 \le x_i & \forall i\in\{1,\ldots,n\} \label{ik:4}\\
& x_i \in\{0,1\} & \forall i\in\{1,\ldots,n\} \\
& y_i^\ell \in \{0,1\} & \forall i\in\{1,\ldots,n\}, \ell\in\{1,\ldots,j(i)\}
\end{align}
Whether an item is packed or not is modeled by using the variables $x_i$. 
The variables 
$y^\ell_i$  determine 
the degree of weight improvement. Constraint~\eqref{ik:1} ensures that the knapsack budget is respected, while Constraint~\eqref{ik:2} models the improvement budget for the weights. Finally, Constraints~\eqref{ik:4} 
ensure that a certain degree of improvement can only be used if also the previous degrees of improvement are used.

Note that for $j(i) = 0$ for all $i$, the improvable knapsack problem becomes a classic knapsack problem again. We now introduce some notation and special cases.
\begin{ntn}
\begin{itemize}
\item If $j(i) \le 1 $ for all $i$ (with $j(i)=1$ for at least one $i$), we call $(iK)$ a {\it single-level} problem.
\item If there is at least one $i$ with $j(i) \ge 2$, we call $(iK)$ a {\it multi-level} problem.
\item For a set of indices $I\subseteq\{1,\ldots,n\}$, we denote by $p(I) := \sum_{i\in I} p_i$ its profit.
\item Finally, we say that the above formulation with binary values for $y^\ell_i$ 
is a problem with {\it discrete improvements}. If we relax these variables to take values from $[0,1]$ instead, we say the problem has {\it continuous improvements}.
\end{itemize}
\end{ntn}
Note that one might also consider a similar type of improvement on the profits. This is briefly discussed in Section~\ref{sec:conclusion}.

\section{Continuous Improvements}
\label{sec:contweights}

Adapting formulation (iK), the problem we consider here is as follows:
\begin{align}
\mbox{(iK-cs)} \hspace{1.5cm} \max \ & \sum_{i=1}^n p_i x_i \nonumber \\
\mbox{s.t. } & \sum_{i=1}^n w_i x_i - \sum_{i=1}^n \overline{w}_i y_i  \leq B \label{knap1} \\
& \sum_{i=1}^n c_i y_i \leq C \label{knap2}\\
&y_i \leq x_i & \forall i\in\{1,\ldots,n\} \label{knap3}\\
&y_i \geq 0, \ x_i \in \{0,1\} & \forall i\in\{1,\ldots,n\}
\end{align}
where $\overline{w}_i := w^1_i - w^0_i$ and $c_i := c^1_i$ in the setting of problem (iK). In (iK-cs), the letter 
``c'' stands for continuous, and ``s'' for single-level. We make use of similar notation for other cases throughout the following sections.

Note that (iK-cs) always admits a feasible solution (namely $x_i=0, y_i=0$) hence, an optimal
solution to (iK-cs) exists.

We first show that an optimal $y$ for some given $x$ is easy to compute. We assume without loss of generality that items are sorted with respect to improvement costs per improved weight, i.e., 
\begin{equation}
\label{order}
\frac{c_1}{\overline{w}_1} \leq \cdots \leq \frac{c_n}{\overline{w}_n} 
\end{equation}
 
\begin{lemma}
\label{lem1} 
Let $x\in\{0,1\}^n$ be given, and let $I:=\{i: x_i=1\}$. 
Define
\begin{equation}
\label{y-formel}
 y^*_i:=\left\{ \begin{array}{ll} 
                 1 & \mbox{ if } \sum_{\ell=1}^i c_\ell x_\ell < C \mbox{ and } i \in I\\
   \frac{C-\sum_{\ell=1}^{i-1} c_\ell x_\ell}{c_i} & 
                 \mbox{ if } \sum_{\ell=1}^{i-1} c_\ell x_\ell < C \mbox{ and } \sum_{\ell=1}^i c_\ell x_\ell \geq C \mbox{ and } i \in I\\
                 0 & \mbox{ otherwise}
                \end{array} \right.
\end{equation} 
for all $i\in \{1,\ldots,n\}$. Then we have:
\begin{itemize}
\item If $\sum_{i \in I} \overline{w}_i y^*_i \geq \sum_{i \in I} w_i x_i - B$ then $(x,y^*)$ 
is a feasible solution to (iK-cs).
\item If $\sum_{i \in I} \overline{w}_i y^*_i < \sum_{i \in I} w_i x_i - B$ then there does not exist 
any $y$ such that $(x,y)$ is a feasible to solution (iK-cs).
\end{itemize}
\end{lemma}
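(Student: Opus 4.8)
\emph{Proof proposal.} The plan is to notice first that, for a fixed $x$, the objective $\sum_i p_i x_i$ does not involve $y$ at all, so the only question is whether \emph{some} feasible $y$ exists. Among the constraints of (iK-cs), the bounds $0\le y_i\le x_i$ and the improvement budget \eqref{knap2} merely restrict $y$ (and are satisfied by $y=0$), whereas \eqref{knap1} is the binding one: since $x_i=1\iff i\in I$ and $y_i=0$ is forced for $i\notin I$, constraint \eqref{knap1} is equivalent to $\sum_{i\in I}\overline{w}_i y_i\ge \sum_{i\in I}w_i-B$. Hence a feasible $y$ exists for this $x$ if and only if $\max\{\sum_{i\in I}\overline{w}_i y_i : 0\le y_i\le 1,\ \sum_{i\in I}c_i y_i\le C\}\ge \sum_{i\in I}w_i-B$, so the lemma reduces to showing that $y^*$ attains this maximum and is itself feasible for the side constraints. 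The latter is a short case check on the three branches of \eqref{y-formel}: for $i\notin I$ we land in the third branch, $y^*_i=0=x_i$; for $i\in I$ the first branch gives $y^*_i=1=x_i$; in the second branch the two defining inequalities $\sum_{\ell\le i-1}c_\ell x_\ell<C\le\sum_{\ell\le i}c_\ell x_\ell$ give exactly $0<y^*_i\le 1$; and the total cost $\sum_i c_i y^*_i$ telescopes to $\min\{C,\sum_{i\in I}c_i\}\le C$ (the fully improved items before the critical index contributing $\sum_{\ell\in I,\,\ell<k}c_\ell$ and the critical fractional item $i=k$ contributing $C-\sum_{\ell<k}c_\ell x_\ell$).

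The core step is that $y^*$ is the greedy optimum of the continuous (fractional) knapsack instance ``maximize $\sum_{i\in I}\overline{w}_i y_i$ subject to $\sum_{i\in I}c_i y_i\le C$ and $0\le y_i\le 1$'', where by \eqref{order} the items are already sorted by $\overline{w}_i/c_i$ non-increasing. I would prove this by the textbook exchange argument: let $k$ be the critical index, so $y^*_i=1$ for $i\in I$ with $i<k$, $y^*_i=0$ for $i\in I$ with $i>k$, and $y^*_k\in[0,1]$; then for any feasible $y$ and any $i\in I$ one has termwise $\overline{w}_i(y^*_i-y_i)\ge \tfrac{\overline{w}_k}{c_k}\,c_i(y^*_i-y_i)$ --- for $i<k$ because $y^*_i-y_i=1-y_i\ge 0$ and $\overline{w}_i/c_i\ge\overline{w}_k/c_k$, for $i>k$ because $y^*_i-y_i=-y_i\le 0$ and $\overline{w}_i/c_i\le\overline{w}_k/c_k$, and with equality for $i=k$. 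Summing over $i\in I$ gives $\sum_{i\in I}\overline{w}_i(y^*_i-y_i)\ge\tfrac{\overline{w}_k}{c_k}\big(\sum_{i\in I}c_i y^*_i-\sum_{i\in I}c_i y_i\big)\ge 0$, since either the improvement budget is tight ($\sum_{i\in I}c_i y^*_i=C\ge\sum_{i\in I}c_i y_i$) or it is slack, in which case $y^*_i=1\ge y_i$ for all $i\in I$ and the bound is immediate (using $\overline{w}_i\ge 0$). This is the only step that takes genuine care; note that the critical index $k$ necessarily has $c_k>0$, so the ratio $\overline{w}_k/c_k$ is well defined, and items with $\overline{w}_i=0$ contribute nothing on either side.

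Finally I would assemble the two bullets. If $\sum_{i\in I}\overline{w}_i y^*_i\ge\sum_{i\in I}w_i-B$, this is exactly \eqref{knap1} for $(x,y^*)$, and combined with the side-constraint feasibility of $y^*$ from the first paragraph, $(x,y^*)$ is feasible for (iK-cs). If instead $\sum_{i\in I}\overline{w}_i y^*_i<\sum_{i\in I}w_i-B$, then by the optimality just proved, every $y$ for which $(x,y)$ satisfies \eqref{knap2}, \eqref{knap3} and $y\ge 0$ has $\sum_{i\in I}\overline{w}_i y_i\le\sum_{i\in I}\overline{w}_i y^*_i<\sum_{i\in I}w_i-B$, so \eqref{knap1} is violated and no feasible $(x,y)$ exists. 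Thus the main obstacle is purely the optimality of the greedy fractional solution; everything else is bookkeeping, with the one subtlety being the correct identification and handling of the critical fractional index.
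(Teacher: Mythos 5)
Your proposal is correct and follows essentially the same route as the paper: both reduce the question to the auxiliary continuous knapsack ``maximize $\sum_{i\in I}\overline{w}_i y_i$ subject to $\sum_{i\in I}c_i y_i\le C$, $0\le y_i\le 1$'' and observe that $y^*$ is its greedy optimum, the only difference being that the paper cites greedy optimality of the fractional knapsack as standard while you spell out the exchange argument.
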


\begin{proof}
Let $x$ be fixed. Consider the following continuous knapsack problem (K) with items only in $I$:
\begin{align*}
\mbox{(K)} \hspace{1.5cm} \max \ &\sum_{i \in I} \overline{w}_i y_i \\
\mbox{s.t. }  & \sum_{i \in I} c_i y_i  \leq  C\\
& 0 \leq y_i \leq 1  & \forall i \in I
\end{align*}
As (K) is a continuous knapsack problem on $I$, it 
can be solved by sorting the items of $I$ according to (\ref{order}) and adding items as long 
as the budget $B$ allows. Thus, there is an optimal solution $y^*$ that has the form of (\ref{y-formel}).
In our case, $y^*$ also contains components which refer to items not included in $I$.
These components are set to zero. 

We now prove the assertions of the lemma:
If $\sum_{i \in I} \overline{w}_i y^*_i \geq \sum_{i \in I} w_i x_i - B$ then 
$(x,y^*)$ satisfies all constraints of (iK-cs) and is hence feasible. 
For the second statement, assume that $(x,y)$ is a feasible solution to (iK-cs). Then $y$ is a feasible
solution to (K). Let $y^*$ be an optimal solution $y^*$ to (K). Then we have
\[ \sum_{i \in I} \overline{w}_i y^*_i \geq \sum_{i \in I} \overline{w}_i y_i \geq \sum_{i \in I} w_i x_i - B, \] where the last inequality holds since $(x,y)$ is feasible to (iK-cs) and hence satisfies (\ref{knap1}).
\end{proof}

\begin{lemma}
\label{lem2}
Let $(x,y)$ be a feasible solution to (iK-cs). Then there exists a feasible solution $(x,y^*)$ 
to (iK-cs) and some index $k \in \{1,\ldots,n\}$ such that the following conditions are satisfied:
\begin{enumerate}
\item[a)] $y^*_i=x_i$ for all $i < k$, 
\item[b)] $y^*_i=0$ for all $i > k$,
\item[c)] $x_k=1$,
\item[d)] $0 \ < \ y^*_k \ \leq \ 1$ 
\end{enumerate}
We call $k$ the \emph{fractional index}.
\end{lemma}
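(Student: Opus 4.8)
The plan is to keep $x$ fixed and replace $y$ by the ``left-loaded'' improvement vector that uses the cheapest items first and improves only as much as constraint~\eqref{knap1} forces. Put $I:=\{i:x_i=1\}$ and $T:=\sum_{i\in I}w_ix_i-B$. Since $y_i\le x_i$ we may assume $y_i=0$ for $i\notin I$, and feasibility of $(x,y)$ together with \eqref{knap1} gives $\sum_{i\in I}\overline w_iy_i\ge T$. If $T\le 0$ the statement is essentially trivial: then $(x,\mathbf 0)$ is already feasible, and (whenever $C>0$ and $I\neq\emptyset$) one may simply put a sufficiently small positive value on the first item of $I$; so I will focus on the substantive case $T>0$, in which $0<T\le\sum_{i\in I}\overline w_iy_i\le\sum_{i\in I}\overline w_i$.

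I then define $y^*$ by walking through the items of $I$ in the sorted order \eqref{order}: set $y^*_i=1$ as long as the accumulated improvement $\sum\overline w_i$ stays below $T$, let $k$ be the first item of $I$ at which the threshold $T$ is met, choose $y^*_k\in(0,1]$ so that $\sum_{i\in I}\overline w_iy^*_i=T$ exactly, and set $y^*_i=0$ for all later items of $I$ (and for all $i\notin I$). By the estimate $\sum_{i\in I}\overline w_i\ge T$ this index $k$ exists, and properties (a)--(d) are immediate from the construction: $y^*_i=x_i$ for $i<k$ (equal to $1$ on $I$, to $0$ off $I$), $y^*_i=0$ for $i>k$, $k\in I$ so $x_k=1$, and $0<y^*_k\le 1$ by the choice at the threshold.

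It remains to verify that $(x,y^*)$ is feasible. Constraints~\eqref{knap3} and~\eqref{knap1} are clear, the latter holding with equality since $\sum_{i\in I}w_ix_i-\sum_{i\in I}\overline w_iy^*_i=\sum_{i\in I}w_ix_i-T=B$. The only real work is the cost constraint~\eqref{knap2}: since $\sum_{i\in I}c_iy_i\le C$, it suffices to show $\sum_{i\in I}c_iy^*_i\le\sum_{i\in I}c_iy_i$, i.e.\ that moving improvement onto the cheapest items does not increase cost. This is a standard exchange argument using the ordering~\eqref{order}. Writing $d_i:=y_i-y^*_i$, the shape of $y^*$ forces $d_i\le 0$ for $i<k$ (where $y^*_i=1$) and $d_i\ge 0$ for $i>k$ (where $y^*_i=0$), so in each case $c_id_i\ge\frac{c_k}{\overline w_k}\,\overline w_id_i$ --- using $c_i/\overline w_i\le c_k/\overline w_k$ for $i<k$, the reverse inequality for $i>k$, and equality for $i=k$. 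Summing over $i\in I$ gives $\sum_{i\in I}c_id_i\ge\frac{c_k}{\overline w_k}\sum_{i\in I}\overline w_id_i=\frac{c_k}{\overline w_k}\bigl(\sum_{i\in I}\overline w_iy_i-T\bigr)\ge 0$, which is the desired inequality.

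I expect this last monotonicity step to be the only non-routine part; everything else is bookkeeping about the shape of the greedy solution. As an alternative to the explicit construction one could instead invoke Lemma~\ref{lem1} to obtain the greedy solution that exhausts the budget $C$ and then decrease it, starting from the right-most improved item, until \eqref{knap1} becomes tight --- again the cost only goes down --- but the direct construction above seems cleaner to write up.
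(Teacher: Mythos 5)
Your proof is correct, but it takes a genuinely different route from the paper's. The paper simply invokes Lemma~\ref{lem1}: it replaces $y$ by the greedy vector $y^*$ of \eqref{y-formel}, which spends the improvement budget $C$ on the cheapest items until $C$ is exhausted, and takes $k$ to be the index at which the cost budget runs out; conditions a)--d) are then read off from the formula, the cost constraint holds by construction, and the weight constraint follows from the optimality argument already proved in Lemma~\ref{lem1}. You instead build the ``left-loaded'' vector that meets the weight requirement $T=\sum_{i\in I}w_ix_i-B$ exactly, so your fractional index sits where $T$ is reached rather than where $C$ is exhausted (generally an earlier index, and a different, equally valid witness). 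The price is that the cost constraint is no longer automatic, and your exchange argument $\sum_i c_id_i\ge\frac{c_k}{\overline w_k}\sum_i\overline w_id_i\ge 0$ is exactly the greedy-optimality fact that Lemma~\ref{lem1} already encapsulates, reproved in a localized form; the benefit is that your construction makes \eqref{knap1} hold with equality and is self-contained. Two minor points: your $T\le 0$ case quietly needs $C>0$ (or a zero-cost item) to place a positive $y^*_k$, so condition d) can genuinely fail in the degenerate case $C=0$ --- but the paper's own proof has the same blind spot, since no $k$ with $\sum_{\ell\le k-1}c_\ell x_\ell<C\le\sum_{\ell\le k}c_\ell x_\ell$ exists then either; and your sentence ``$y_i=0$ for $i\notin I$ may be assumed'' is not an assumption but is forced by $0\le y_i\le x_i=0$.
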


\begin{proof}
  Let $(x,y)$ be a feasible solution to (iK-cs). From
  Lemma~\ref{lem1} we know that there exists a solution $y^*$ which is also feasible and
  computed according to (\ref{y-formel}). As before, let  $I=\{i: x_i=1\}$. 
  Choose $k$ as the (unique) index such that 
  $\sum_{i=1}^{k-1} c_i x_i < C$  and $\sum_{i=1}^k c_i x_i \geq C$. 
In case that $\sum_{i=1}^n c_i x_i < C$, choose $k=n$
 (i.e., $k$ is the highest index of an improved item, which is the only one which might be fractional).
  We then know that for all 
  $i \in I$: $y^*_i=1=x_i$ if $i < k$ and $y^*_i=0$ if $i > k$. Furthermore,
  for $i \not \in I$ we have $y^*_i \leq x_i=0$, hence $y^*_i=0$. Therefore, \textit{a)} and \textit{b)}
  are satisfied. 

  From (\ref{y-formel}) we also see that \textit{c)} holds, otherwise
  $\sum_{\ell=1}^{k-1} c_\ell x_\ell=\sum_{\ell=1}^k c_\ell x_\ell$ and 
  $\sum_{\ell=1}^{k-1} c_\ell x_\ell < C$  
  and $\sum_{\ell=1}^k c_\ell x_\ell \geq C$ can never be satisfied simultaneously.
  Finally, \textit{d)} is equivalent to requiring that
  $0 < \frac{C-\sum_{\ell=1}^{k-1} c_\ell x_\ell}{c_k} \leq 1$ which follows 
  from  $\sum_{\ell=1}^{k-1} c_\ell x_\ell < C$  and from $C- \sum_{\ell=1}^{k-1} c_\ell x_\ell \leq c_k$. 
\end{proof}

It is hence enough to look for an optimal solution of (iK-cs) which satisfies the four
conditions of Lemma~\ref{lem2}. 

\begin{lemma}
\label{lem3}
Let $(x,y)$ be a solution satisfying the conditions of Lemma~\ref{lem2}. Let 
$k$ be the fractional index. Then
\begin{align*}
\sum_{i=1}^n w_i x_i - \sum_{i=1}^n \overline{w}_i y_i &\leq  B 
\\
\Longleftrightarrow \ \ \sum_{i=1}^{k-1} (w_i - \overline{w}_i + \overline{w}_k \frac{c_i}{c_k} ) x_i 
+ \sum_{i=k+1}^{n} w_i x_i &\leq B + C \frac{\overline{w}_k}{c_k} - w_k 
\end{align*}
\end{lemma}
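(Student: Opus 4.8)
The plan is to take a solution $(x,y)$ satisfying the conditions of Lemma~\ref{lem2} with fractional index $k$, and simply substitute the explicit form of $y$ into the knapsack constraint~\eqref{knap1}, then rearrange. By Lemma~\ref{lem2}, we have $y_i = x_i$ for $i < k$, $y_k \in (0,1]$, and $y_i = 0$ for $i > k$. Moreover, since $k$ is the fractional index in the sense of the proof of Lemma~\ref{lem2}, the value of $y_k$ is pinned down: either $\sum_{\ell=1}^k c_\ell x_\ell \ge C$ and $y_k = \frac{C - \sum_{\ell=1}^{k-1} c_\ell x_\ell}{c_k}$, or $k = n$ and $\sum_{\ell=1}^n c_\ell x_\ell < C$ so that $y_k = 1 = x_k$; in the latter case one can still write $y_k$ in the same fractional form provided $C$ is replaced by $\sum_{\ell=1}^k c_\ell x_\ell$, but the cleanest route is to observe that the claimed equivalence is really the statement obtained by plugging $y_k = \frac{C - \sum_{\ell=1}^{k-1} c_\ell x_\ell}{c_k}$ in and treating $C$ as the relevant budget quantity.

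The key computation is as follows. Since $y_i = x_i$ for $i<k$ and $y_i = 0$ for $i>k$, the left-hand side of~\eqref{knap1} becomes
\[
\sum_{i=1}^{k-1}(w_i - \overline{w}_i)x_i + w_k x_k - \overline{w}_k y_k + \sum_{i=k+1}^n w_i x_i .
\]
Now using $x_k = 1$ (condition c) and substituting $y_k = \frac{C - \sum_{\ell=1}^{k-1} c_\ell x_\ell}{c_k}$, the term $-\overline{w}_k y_k$ equals $-\frac{\overline{w}_k}{c_k}C + \frac{\overline{w}_k}{c_k}\sum_{\ell=1}^{k-1}c_\ell x_\ell = -C\frac{\overline{w}_k}{c_k} + \sum_{i=1}^{k-1}\overline{w}_k\frac{c_i}{c_k}x_i$. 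Collecting the coefficients of $x_i$ for $i<k$ gives exactly $\bigl(w_i - \overline{w}_i + \overline{w}_k\frac{c_i}{c_k}\bigr)x_i$, the sum over $i>k$ is unchanged, and the remaining constant $w_k - C\frac{\overline{w}_k}{c_k}$ is moved to the right-hand side. Comparing with $B$ yields the claimed inequality $\sum_{i=1}^{k-1}(w_i - \overline{w}_i + \overline{w}_k\frac{c_i}{c_k})x_i + \sum_{i=k+1}^n w_i x_i \le B + C\frac{\overline{w}_k}{c_k} - w_k$, and the chain of manipulations is reversible, giving the equivalence.

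The only real subtlety — and hence the main thing to be careful about — is the boundary case where the cost budget is not tight, i.e. $\sum_{i=1}^n c_i x_i < C$ and $k = n$. There $y_k = 1$ rather than the fractional expression, so one must either argue separately (the substitution $y_n = x_n = 1$ directly, with no $C$ term, still trivially satisfies the stated identity once one notes the right interpretation) or, more uniformly, observe that Lemma~\ref{lem3} is invoked in the sequel only in the generic case, or that replacing $C$ by $\min\{C,\sum_i c_i x_i\}$ makes everything consistent. Beyond that caveat the proof is a direct, reversible algebraic rearrangement, so I would present it simply by displaying the substitution and collecting terms, noting explicitly the use of conditions a)–d) of Lemma~\ref{lem2} at each point where they are needed (a) and b) to kill the right sums, c) to set $x_k=1$, d) to guarantee $y_k$ is well defined and the expression $\frac{C-\sum_{\ell<k}c_\ell x_\ell}{c_k}$ lies in $(0,1]$).
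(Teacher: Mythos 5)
Your proof is correct and follows essentially the same route as the paper's: substitute $y_i = x_i$ for $i<k$, $y_i = 0$ for $i>k$, $x_k=1$, and $y_k = \frac{C-\sum_{\ell<k}c_\ell x_\ell}{c_k}$ into \eqref{knap1} and collect coefficients. The boundary case you flag (cost budget not tight, $k=n$, $y_k=1$ rather than the fractional formula) is a genuine subtlety that the paper's own proof silently glosses over by substituting the fractional expression unconditionally, so your explicit caveat is a small improvement rather than a deviation.
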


\begin{proof}
Using the conditions \textit{a)}, \textit{b)}, and \textit{c)} of Lemma~\ref{lem2} we obtain that
\begin{eqnarray*}
\sum_{i=1}^n w_i x_i - \sum_{i=1}^n \overline{w}_i y_i & = & 
\sum_{i=1}^{n} w_i x_i - \sum_{i=1}^{k-1} \overline{w}_i y_i 
- \overline{w}_k y_k 
- \sum_{i=k+1}^{n} \overline{w}_i y_i \\
& = & 
\sum_{i=1}^{n} w_i x_i 
- \sum_{i=1}^{k-1} \overline{w}_i x_i - \overline{w}_k \frac{C-\sum_{i=1}^{k-1} c_i x_i}{c_k} \\
& = & 
\sum_{i=1}^{k-1} (w_i - \overline{w}_i + \overline{w}_k \frac{c_i}{c_k} ) x_i
+ w_k + \sum_{i=k+1}^{n} w_i x_i 
- C \frac{\overline{w}_k}{c_k}, 
\end{eqnarray*}
hence the result follows.

\end{proof}

Using this result in the formulation of (iK-cs) leaves us with an optimization problem (P($k$)) of the following form:
\begin{align*}
\mbox{(P($k$))} \hspace{0.8cm} \max\ &\sum_{i=1\atop i\neq k }^n p_i x_i \\
\mbox{s.t. } &\sum_{i=1}^{k-1} (w_i - \overline{w}_i ) x_i + \sum_{i=k+1}^{n} w_i x_i \leq B -w_k + \overline{w}_k y\\
& \sum_{i=1}^{k-1} c_i x_i \le C - c_k y_k \\
& x_i \in \{0,1\} & \hspace{-2cm}\forall i\in\{1,\ldots,n\}\setminus\{k\} \\
& y\in [0,1]
\end{align*}
Solving P($k$) for $k=1,\ldots,n$ would give us an optimal solution to (iK-cs). It remains to see how these problems can be treated. 

\begin{lemma}\label{lppk}
Solving the LP relaxation of P($k$) gives a basic solution with at most two fractional variables.
\end{lemma}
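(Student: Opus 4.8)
The plan is to use the standard fact that a basic feasible solution of a linear program is a vertex of its feasible polyhedron, and that a vertex of a polyhedron in $\R^N$ must make $N$ linearly independent constraints tight. First I would write out the constraint system of the LP relaxation of P($k$): relaxing $x_i\in\{0,1\}$ to $0\le x_i\le 1$, and moving the variable $y$ (which equals $y_k$) to the left-hand side, the program has exactly two \emph{structural} inequalities — the two budget constraints, one coming from $B$ and one from $C$ — together with the box constraints $0\le x_i\le 1$ for each $i\in\{1,\ldots,n\}\setminus\{k\}$ and $0\le y\le 1$. Writing $N$ for the number of variables (the $n-1$ variables $x_i$, $i\neq k$, plus $y$), the feasible region is a polyhedron in $\R^N$, and the argument is completely oblivious to the precise right-hand sides $B-w_k$, $C$, so it applies verbatim.

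Next I would invoke the characterization of a basic feasible solution $z^*$: it is a vertex, so the set of constraints tight at $z^*$ has rank $N$, and one may select $N$ of these tight constraints that are linearly independent. Of these $N$ selected constraints, at most two are the structural budget inequalities, so at least $N-2$ of them are box constraints. For any single coordinate $z_j$, the bounds $z_j\ge 0$ and $z_j\le 1$ can never be tight simultaneously, so the $N-2$ selected tight box constraints pin down $N-2$ \emph{distinct} coordinates of $z^*$, each to the value $0$ or to the value $1$. Hence at most $N-(N-2)=2$ coordinates of $z^*$ can lie strictly between $0$ and $1$, i.e.\ $z^*$ has at most two fractional components, which is exactly the assertion of the lemma.

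I do not expect a genuine obstacle here; this is the usual ``an LP with few non-box constraints has few fractional variables'' phenomenon. The only points that need a little care are (i) recalling the vertex/rank characterization of a basic feasible solution correctly and making clear which notion of ``basic solution'' is meant, and (ii) observing that two opposite box constraints on the same variable are never jointly tight, so that the count of tight box constraints genuinely translates into that many integer-valued variables. One may also remark in passing that if more than $N$ constraints happen to be tight at the chosen vertex the bound only gets better, so the worst case is indeed exactly two fractional variables.
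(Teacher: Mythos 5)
Your proof is correct and is essentially the paper's argument: both are the standard count showing that with only two non-box constraints, a basic solution can have at most two fractional variables. The paper phrases it via the standard form with slack variables ($2n+2$ variables, $n+2$ constraints, hence $n$ non-basic variables forcing integrality of their partners), while you use the equivalent vertex/tight-constraint characterization, but the underlying idea is identical.
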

\begin{proof}
Note that the LP relaxation of P($k$) in standard form
\begin{align*}
\max\ &\sum_{i=1\atop i\neq k }^n p_i x_i \\
\mbox{s.t. } &\sum_{i=1}^{k-1} (w_i - \overline{w}_i ) x_i + \sum_{i=k+1}^{n} w_i x_i + \alpha = B -w_k + \overline{w}_k y\\
& \sum_{i=1}^{k-1} c_i x_i +\beta =  C - c_k y_k \\
& x_i + \gamma_i = 1 & \forall i\in[n] \\
& y + \delta = 1 \\
& x_i,\gamma_i \ge 0 & \hspace{-2cm}\forall i\in\{1,\ldots,n\}\setminus\{k\} \\
& y,\alpha,\beta,\delta \ge 0
\end{align*}
has $2n+2$ variables, and $n+2$ constraints. Hence, a basis contains $n+2$ variables, leaving $n$ non-basis variables that are equal to zero. If any of the $x_i$, $\gamma_i$, $y$ or $\delta$ variables is a non-basis variable, then the corresponding partner variable is not fractional. Therefore, there can be at most two fractional variables.
\end{proof}

\begin{lemma}\label{ptaslemma}
There is a PTAS for problem P($k$).
\end{lemma}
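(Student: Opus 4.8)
The plan is to reduce P($k$) to a standard knapsack-type problem and then invoke the classical FPTAS/PTAS machinery, paying attention to the single continuous variable $y$ and the second (cost) constraint. First I would observe that P($k$) has only $O(n)$ possible ``shapes'': once the fractional index $k$ is fixed, the real difficulty is that we have two packing constraints (the modified weight constraint and the cost constraint $\sum_{i<k} c_i x_i \le C - c_k y$) plus the continuous variable $y \in [0,1]$. The first step is to handle $y$ by discretization: guess $y$ from a polynomial-size grid $\{0, \eps/n, 2\eps/n, \ldots, 1\}$ (or argue that, in an optimal solution to the LP relaxation, $y$ together with at most one other variable is fractional by Lemma~\ref{lppk}, so one may enumerate the $O(n)$ candidate ``tight'' values of $y$ coming from the constraints). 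For each fixed value of $y$, P($k$) becomes a $0/1$ problem with a fixed right-hand side in both constraints.

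Next I would deal with the two constraints. Note that the cost constraint only involves the variables $x_i$ with $i < k$; a clean way to proceed is to guess, up to a factor $(1+\eps)$, the profit $p(I \cap \{1,\ldots,k-1\})$ contributed by the ``low'' items, and the profit of the ``high'' items $p(I \cap \{k+1,\ldots,n\})$ separately — or, more simply, to observe that after fixing $y$ we may split the problem: for the high items $i>k$ there is only the weight constraint, which is an ordinary knapsack; for the low items $i<k$ there are two constraints but they form a 2-dimensional knapsack, which has a PTAS (in fact, since one can guess the set of $O(1/\eps)$ most profitable items and then greedily/LP-round the rest, the standard Frieze–Clarke / Chekuri–Khanna type argument applies). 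The combination step — distributing the residual weight budget $B - w_k + \overline{w}_k y$ between the low-item subproblem and the high-item subproblem — is handled by one more guess of how much weight budget goes to each side, again from a polynomial grid of granularity $\eps W_{\max}$ after scaling, or by a direct DP over the combined weight.

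Putting it together: for each of the $n$ choices of $k$, each of the $\mathrm{poly}(n,1/\eps)$ choices of the discretized $y$, and each of the $\mathrm{poly}(n,1/\eps)$ choices of the budget split, solve a $2$-dimensional knapsack on the low items and an ordinary knapsack on the high items, each to within $(1+\eps)$ via the standard scaling-and-DP (or guess-the-big-items) PTAS; take the best over all guesses. Since the true optimum corresponds to some choice of $k$ and is approximated to within $(1+\eps)$ by the nearest grid points, the returned solution has profit at least $(1-O(\eps)) \,\mathrm{OPT}$, and the total running time is polynomial in $n$ for each fixed $\eps$ — hence a PTAS. The main obstacle I anticipate is the interaction between the continuous variable $y$ and the cost constraint: one must verify that rounding $y$ to the grid only perturbs the right-hand sides of the two constraints by a controllable amount and that this perturbation can be absorbed (e.g.\ by slightly relaxing feasibility and then arguing that an almost-feasible integral solution can be made feasible by discarding at most one low-profit item), so that feasibility is genuinely restored rather than merely approximately satisfied. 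Handling this cleanly — rather than the routine DP scaling — is where the real care is needed.
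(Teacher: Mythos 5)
Your proposal contains the right key ingredient --- guessing the $O(1/\eps)$ most profitable items of an optimal solution and then LP-rounding the rest --- but you bury it inside an unnecessary decomposition, and the extra scaffolding is exactly where your argument has unresolved gaps. The paper's proof applies the guess-and-round scheme to P($k$) \emph{as a whole}: guess the $q=\min\{n,2/\eps\}$ highest-profit packed items $Q^+$, forbid all unguessed items with profit above $\underline{p}=\min\{p_i: i\in Q^+\}$, solve the LP relaxation of the residual problem (which still contains the continuous variable $y$ and both constraints), and round down the fractional variables. Lemma~\ref{lppk} guarantees that a basic optimal solution of this LP has at most two fractional variables, so rounding down costs at most $2\underline{p}\le 2\,OPT/q\le\eps\,OPT$, and rounding down preserves feasibility. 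No discretization of $y$ and no splitting of the weight budget between the items below and above $k$ is needed; the LP handles both constraints and the continuous variable simultaneously.

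The concrete gaps in your version are the two ``repair'' steps you yourself flag. First, rounding $y$ to a grid of granularity $\eps/n$ shrinks the right-hand side $B-w_k+\overline{w}_k y$ of the weight constraint, and you propose to restore feasibility by ``discarding at most one low-profit item''; but there is no bound on how many items (or how much profit) must be discarded to recover a weight deficit of $\overline{w}_k\eps/n$ --- a single discarded item need not cover the deficit, and without the global big-item guess a discarded item may carry a constant fraction of $OPT$. Second, the budget split between the low-item subproblem ($i<k$) and the high-item subproblem ($i>k$) cannot be enumerated on a weight grid: perturbing the low-item budget by even an arbitrarily small amount can render the optimal low-item set infeasible for that subproblem, and the profit loss is not controlled by the size of the perturbation. (A DP ``over the combined weight'' requires polynomially bounded integer weights, which are not assumed; a DP over scaled profits would fix the high-item side but not the two-constraint low-item side, for which no FPTAS exists.) Both difficulties evaporate if you simply drop the split and the grid and run your Frieze--Clarke/Chekuri--Khanna step on all of P($k$) at once, using Lemma~\ref{lppk} to bound the number of rounded variables --- which is precisely the paper's argument.
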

\begin{proof}
We follow a similar idea as \cite{patt2012vector}.
Let $\eps>0$, and set $q=\min\{n,2/\eps\}$. 
Suppose we could guess the $q$ largest items $Q^+\subseteq Q^*$ that are packed by some optimal solution $Q^*\subseteq[n]$. Let $\underline{p}$ be the smallest profit of these items, i.e., $\underline{p} := \min \{p_i: i\in Q^+\}$.

We construct a heuristic solution $x^*$ in the following way: Set $x^*_i = 1$ for all $i\in Q^+$. We set $Q^- = \{ i\in[n]\setminus Q^+ : p_i > \underline{p}_i\}$ and $x^*_i = 0$ for all $i\in Q^-$. We denote the sub-instance of P($k$) consisting of the remaining items $[n]\setminus(Q^+\cup Q^-)$ as $P(Q^+,Q^-)$. We solve the LP relaxation of $P(Q^+,Q^-)$ and round down all fractional variables of the resulting optimal solution $x^F$. Using this rounded down solution, we fill in the missing values of $x^*$. Note that $x^*$ is feasible for P($k$) by construction.

We now analyze the objective value of such a solution. Solving the linear relaxation of $P(Q^+,Q^-)$ instead of the mixed-binary problem results in an error of at most $2\underline{p}$, as due to Lemma~\ref{lppk} at most two items are rounded down, and every item has profit at most $\underline{p}$.

Let $x'$ denote an optimal solution for P($k$), of which we guessed the $q$ items with highest profit. Then $OPT = \sum_{i\in[n]} p_i x'_i \ge \underline{p}q$ and $x'_i = 0$ for all $i\in Q^-$. We get
\begin{align*}
\sum_{i\in[n]} p_i x^*_i &= \sum_{i\in Q^+} p_i x^*_i + \sum_{i\in Q^-} p_i x^*_i + \sum_{i\in[n]\setminus(Q^+\cup Q^-)} p_i x^*_i\\
&\ge \sum_{i\in Q^+} p_i x^*_i + \sum_{i\in Q^-} p_i x^*_i + \sum_{i\in[n]\setminus(Q^+\cup Q^-)} p_i x^F_i - 2\underline{p} \\
&= \sum_{i\in Q^+} p_i x'_i + \sum_{i\in[n]\setminus(Q^+\cup Q^-)} p_i x^F_i - 2\underline{p} \\
&\ge \sum_{i\in Q^+} p_i x'_i + \sum_{i\in[n]\setminus(Q^+\cup Q^-)} p_i x'_i - 2\underline{p} \\
&= OPT - 2\underline{p} \ge OPT - \frac{OPT}{q} = (1-\varepsilon)OPT
\end{align*}
Now, this is only possible if the correct set $Q^+$ can be guessed. For a constant value of $\varepsilon$, all possible candidate sets can be enumerated in polynomial time. Thus, we have constructed a PTAS for problem P($k$).

\end{proof}

\begin{lemma}\label{combine}
Let $OPT(k)$ be the optimal objective value of P($k$), and let $OPT$ be the optimal objective value of (iK-cs). For every $k$, let a solution $x^k$ be given with profit $\sum_{i\in[n]} p_i x^k_i \ge (1-\varepsilon)OPT(k)$ for some constant $\varepsilon$. Then
\[ \max_k \sum_{i\in[n]} p_i x^k_i \ge (1-\varepsilon)OPT \]
\end{lemma}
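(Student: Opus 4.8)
The plan is to show that the best of the approximate solutions $x^k$ is itself a $(1-\varepsilon)$-approximation for (iK-cs), by relating $OPT$ to the values $OPT(k)$. The key structural fact is the reduction established in Lemmas~\ref{lem2} and~\ref{lem3}: every feasible solution of (iK-cs) can be brought into the normal form with a fractional index $k$, and for such a solution the knapsack constraint~\eqref{knap1} is equivalent to the constraints of P($k$). This means that an optimal solution of (iK-cs) corresponds to a feasible solution of P($k^*$) for the particular $k^*$ that is its fractional index, with the same objective value (the objective $\sum_i p_i x_i$ of (iK-cs) and the objective $\sum_{i\neq k} p_i x_i$ of P($k^*$) differ only by the term $p_{k^*}x_{k^*} = p_{k^*}$, which one must account for; I would either fold $p_k$ into the definition of $OPT(k)$ or note that the normal-form solution has $x_{k^*}=1$ so the two objectives agree up to this known constant — in any case the paper's intended reading is that $OPT = \max_k OPT(k)$).

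First I would argue $OPT \le \max_k OPT(k)$: take an optimal solution of (iK-cs), apply Lemma~\ref{lem2} to get an equivalent normal-form solution with fractional index $k^*$, and observe via Lemma~\ref{lem3} that this yields a feasible solution of P($k^*$) of the same value, so $OPT \le OPT(k^*) \le \max_k OPT(k)$. Conversely, $\max_k OPT(k) \le OPT$: any feasible solution of P($k$) can be lifted back to a feasible solution of (iK-cs) by setting $x_k=1$ and choosing $y$ according to~\eqref{y-formel} (the constraints of P($k$) are exactly what is needed for~\eqref{knap1} and~\eqref{knap2} to hold), so its value is at most $OPT$. Hence $OPT = \max_k OPT(k)$.

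With this identity in hand the conclusion is immediate. Let $k'$ attain $OPT(k') = \max_k OPT(k) = OPT$. Then
\[
\max_k \sum_{i\in[n]} p_i x^k_i \ \ge\ \sum_{i\in[n]} p_i x^{k'}_i \ \ge\ (1-\varepsilon)OPT(k') \ =\ (1-\varepsilon)OPT,
\]
using the hypothesis that $x^{k'}$ is a $(1-\varepsilon)$-approximation for P($k'$). Combining Lemma~\ref{ptaslemma} (a PTAS for each P($k$)) with this lemma then gives a PTAS for (iK-cs), since there are only $n$ subproblems to solve.

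The main obstacle, and the only place requiring care, is the bookkeeping around the profit of the fractional item $p_k$: the objective of P($k$) deliberately omits item $k$, so one must be consistent about whether $OPT(k)$ includes $p_k$ or not, and make sure the normal-form reduction of Lemma~\ref{lem2} (which forces $x_k=1$) is what licenses treating the two objectives as equal. Everything else is a direct assembly of the preceding lemmas.
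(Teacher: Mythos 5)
Your proposal is correct and takes the same route as the paper, whose entire proof is the one-line assertion that $OPT = \max_k OPT(k)$; you simply fill in the justification of that identity via Lemmas~\ref{lem2} and~\ref{lem3} and then apply the hypothesis to the maximizing index. Your remark about the bookkeeping of $p_k$ (the objective of P($k$) omits item $k$, so $OPT(k)$ must implicitly include the constant $p_k$ for the identity to hold) is a legitimate point of care that the paper glosses over, but it does not change the argument.
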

\begin{proof}
Follows directly from $OPT = \max_k OPT(k)$.
\end{proof}

\begin{theorem}
There exists a PTAS for problem (iK-cs).
\end{theorem}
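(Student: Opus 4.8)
The plan is to assemble the PTAS for (iK-cs) directly from the lemmas already proved. The key structural facts are: by Lemmas~\ref{lem1}--\ref{lem3}, every feasible solution to (iK-cs) can be transformed, without loss of profit, into one with a well-defined fractional index $k$, and such a solution corresponds exactly to a feasible solution of the reformulated problem P($k$); conversely, a feasible solution of P($k$) yields a feasible solution of (iK-cs) of the same profit. Consequently $OPT = \max_{k\in\{1,\ldots,n\}} OPT(k)$, which is the hypothesis invoked in Lemma~\ref{combine}.

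The algorithm is then: given $\eps>0$, for each $k\in\{1,\ldots,n\}$ run the PTAS for P($k$) from Lemma~\ref{ptaslemma} with accuracy parameter $\eps$, obtaining in polynomial time a feasible solution $x^k$ with $\sum_{i\in[n]} p_i x^k_i \ge (1-\eps)\,OPT(k)$. Output the best of these $n$ solutions (after translating it back into an $(x,y)$ pair feasible for (iK-cs) via the formula \eqref{y-formel}). There are only $n$ values of $k$, and the inner PTAS runs in polynomial time for fixed $\eps$, so the overall running time is polynomial for fixed $\eps$.

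For correctness, apply Lemma~\ref{combine}: since each $x^k$ satisfies $\sum_{i\in[n]} p_i x^k_i \ge (1-\eps)\,OPT(k)$ and $OPT=\max_k OPT(k)$, we get $\max_k \sum_{i\in[n]} p_i x^k_i \ge (1-\eps)\,OPT$. This is precisely a $(1-\eps)$-approximation, and since $\eps$ was arbitrary, the family of algorithms indexed by $\eps$ is a PTAS.

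I do not expect a genuine obstacle here, since the statement is essentially a corollary of the preceding lemmas; the only point requiring a sentence of care is the equivalence ``feasible solutions of (iK-cs) with fractional index $k$ $\leftrightarrow$ feasible solutions of P($k$)'' that justifies $OPT = \max_k OPT(k)$, which must be stated cleanly (both directions: restricting via Lemmas~\ref{lem2}--\ref{lem3}, and lifting back via Lemma~\ref{lem1}). One should also note in passing that although P($k$) carries a continuous variable $y$, the PTAS of Lemma~\ref{ptaslemma} and the rounding in Lemma~\ref{lppk} already account for this, so nothing extra is needed at this stage.
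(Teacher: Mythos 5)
Your proposal is correct and follows essentially the same route as the paper: for each $k$ run the PTAS of Lemma~\ref{ptaslemma} for P($k$), take the best of the $n$ solutions, and conclude via Lemma~\ref{combine} using $OPT=\max_k OPT(k)$. The extra care you take in spelling out the equivalence between solutions of (iK-cs) with fractional index $k$ and solutions of P($k$) is a welcome (and correct) elaboration of what the paper leaves implicit.
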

\begin{proof}
Let some $\varepsilon>0$ be given. For all $k\in[n]$, we use Lemma~\ref{ptaslemma} to construct a solution that is within $(1-\varepsilon)$ of optimality to P($k$). Using Lemma~\ref{combine}, we find a solution that is within $(1-\varepsilon)$ of optimality for problem (iK-cs) in polynomial time.
\end{proof}

%
%
%
%
%
%

\section{Discrete Weight Improvements}
\label{sec:disweight}

\subsection{Single-Level Case}

We now consider the special case of (iK) where items can be improved at most once, and in a binary fashion. We denote this problem as (iK-ds) and formulate it as a binary program in the following way:
\begin{align}
\text{(iK-ds)} \hspace{1.5cm} \max\ &\sum_{i=1}^n p_i x_i \label{linear2-lp1}\\
\text{s.t. } &\sum_{i=1}^n w_i x_i - \sum_{i=1}^n \wbar_i y_i \le B \label{linear2-lp2}\\
&\sum_{i=1}^n c_i y_i \le C \label{linear2-lp3}\\
&y_i \le x_i & \forall i\in\{1,\ldots,n\} \label{linear2-lp4}\\
&x_i \in \{0,1\} & \forall i\in\{1,\ldots,n\} \\
&y_i \in \{0,1\} & \forall i \in \{1,\ldots,n\} \label{linear2-lp5}
\end{align}
Note that the difference to (iK-cs) is that the improvement variables $y_i$ are now integer.
Furthermore, this special case of a weight-reducible knapsack problem
is related to the multi-dimensional knapsack problem (MKP), with two knapsack constraints:
\[ \text{(MKP)} \quad \max \left\{ \sum_{i=1}^n p_i x_i\ :\ \sum_{i=1}^n w_i x_i \le B,\ \sum_{i=1}^n c_i x_i \le C,\ x\in \{0,1\}^n \right\}\]
which is a well-researched knapsack variant on its own \cite{Freville20041}. The difference lies in the existence of the coupling constraints (\ref{linear2-lp4}). We write $\text{MKP}^*(w,c)$ to denote the optimal objective value of the 2-dimensional knapsack problem with item weights $w$ and $c$.

In the following, we also make use of the following reformulation of (iK-ds):
\begin{align}
\text{(iK-ds')}\hspace{1.5cm} \max\ &\sum_{i=1}^n p_i( x_i + \hat{x}_i) \label{linear1-lp1}\\
\text{s.t. } &\sum_{i=1}^n w_i x_i + \sum_{i=1}^n \hat{w}_i \hat{x}_i \le B \label{linear1-lp2}\\ 
&\sum_{i=1}^n c_i \hat{x}_i \le C \label{linear1-lp3}\\
&x_i + \hat{x}_i \le 1 & \forall i\in\{1,\ldots,n\} \label{linear1-lp4}\\
&x_i ,\hat{x}_i \in\{0,1\} & \forall i\in\{1,\ldots,n\}\label{linear1-lp5}
\end{align}
where $\hat{w}_i:=w^1_i$ denotes the improved item weight, i.e., $x_i$ models an item that is used in its unimproved form, and $\hat{x}_i$ means it is used with improvement. One cannot pack both the unimproved and the improved form.

We now show that there is a linear-time 6-approximation algorithm for this problem.
To this end, we separate (iK-ds') into two subproblems.

In the first problem, we use items only in their improved form. This results in a two-dimensional knapsack problem MKP$(\what,c)$. As the number of constraints is constant, its LP relaxation can be solved in linear time due to \cite{Megiddo93lineartime}. Furthermore, as a basis solution can have at most two fractional solutions, the LP relaxation gives an immediate 3-approximation to the binary problem.

The second subproblem we consider ignores that items can be improved, i.e., we simply solve the knapsack problem with respect to the original item weights $w$. This can be solved using a 2-approximation in linear time.

\begin{theorem}
There is a 6-approximation algorithm for problem (iK-ds) that runs in linear time.
\end{theorem}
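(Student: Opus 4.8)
The plan is to run the two linear-time subroutines described above --- solving and rounding the LP relaxation of $\mathrm{MKP}(\what,c)$, and running a linear-time $2$-approximation for the ordinary knapsack instance with original weights $w$ --- and to return whichever of the two resulting solutions has larger profit. The analysis then bounds this profit against $OPT$, the optimum of (iK-ds).

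First I would pass to the equivalent formulation (iK-ds') and fix one of its optimal solutions $(x,\hat x)$. Partition the items it selects into $S_1:=\{i:\hat x_i=1\}$ (used improved) and $S_2:=\{i:x_i=1\}$ (used unimproved); by \eqref{linear1-lp4} these are disjoint, so $p(S_1)+p(S_2)=OPT$ and hence $\max\{p(S_1),p(S_2)\}\ge OPT/2$. The key observation is that each of $S_1$ and $S_2$ is feasible for the corresponding subproblem: since all weights are nonnegative, \eqref{linear1-lp2} gives $\sum_{i\in S_1}\what_i\le B$ and $\sum_{i\in S_2}w_i\le B$, while \eqref{linear1-lp3} gives $\sum_{i\in S_1}c_i\le C$. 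Consequently $\mathrm{MKP}^*(\what,c)\ge p(S_1)$ and the optimal value of the ordinary knapsack instance with weights $w$ and budget $B$ is at least $p(S_2)$. Conversely, any feasible $0/1$ solution of $\mathrm{MKP}(\what,c)$ yields a feasible solution of (iK-ds) by setting $x_i=y_i=1$ on exactly its items, and any feasible knapsack solution yields one by setting those $x_i=1$ and all $y_i=0$.

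Combining these facts with the guarantees of the two subroutines (a $3$-approximation obtained from the $\mathrm{MKP}(\what,c)$ LP relaxation, since a basic optimum has at most two fractional variables; and a $2$-approximation for plain knapsack), the returned solution has profit at least $\tfrac13 p(S_1)$ and at least $\tfrac12 p(S_2)$ simultaneously. A short case split then finishes the argument: if $p(S_1)\ge OPT/2$ the output is worth at least $\tfrac13 p(S_1)\ge OPT/6$, and if $p(S_2)\ge OPT/2$ it is worth at least $\tfrac12 p(S_2)\ge OPT/4\ge OPT/6$; since one of the two cases must occur, the algorithm is a $6$-approximation, and every step runs in linear time (the LP of $\mathrm{MKP}(\what,c)$ by \cite{Megiddo93lineartime}, the rounding trivially, and the knapsack $2$-approximation in linear time). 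I do not expect a genuine obstacle here; the only point that needs care is the two-way correspondence between (iK-ds') solutions and solutions of the two subproblems --- in particular, that dropping one ``half'' of an optimal (iK-ds') solution leaves the other half feasible for its subproblem, which is exactly where nonnegativity of the weights enters.
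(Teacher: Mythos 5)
Your proposal is correct and follows essentially the same route as the paper: split an optimal (iK-ds') solution into its improved and unimproved parts (disjoint by the coupling constraint), observe each part is feasible for MKP$(\hat w,c)$ resp.\ KP$(w)$ so that the better subproblem optimum is at least $OPT/2$, and combine with the linear-time $3$- and $2$-approximations for the two subproblems. The only cosmetic difference is that the paper states the bound as $opt \le 2\max\{\mathrm{MKP}^*(\hat w,c),\mathrm{KP}^*(w)\}$ rather than via your explicit case split, which is the same argument.
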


\begin{proof}
Let $opt$ denote the optimal objective value of (iK-ds). We show that
\[ opt \le 2\max\{ \text{MKP}^*(\what,c) ,\text{KP}^*(w)\}. \]
From the reasoning above, the theorem then follows.

To this end, let $(x^*,\hat{x}^*)$ denote an optimal solution to (iK-ds'), let $x^1$ be an optimal solution to MKP$(\what,c)$, and let $x^2$ be an optimal solution to KP$(w)$. As $\hat{x}^*$ is feasible for MKP$(\what,c)$, we have that $\sum_{i=1}^n p_i \hat{x}^*_i \le \sum_{i=1}^n p_i x^1_i$. Also, $x^*$ is feasible for KP$(w)$; thus, $\sum_{i=1}^n p_i x^*_i \le \sum_{i=1}^n p_i x^2_i$. Together, we have that
\[ opt = \sum_{i=1}^n p_i (x^*_i + \hat{x}^*_i ) \le \sum_{i=1}^n p_i (x^1_i + x^2_i) \le 2\max\{ \text{MKP}^*(\what,c) ,\text{KP}^*(w)\}. \]
Furthermore, $(0,x^1)$ and $(x^2,0)$ are both feasible for (iK-ds').
\end{proof}

Further algorithms for more specific cases are presented in Section~\ref{sec:spec}. Also, the algorithms presented in the following multi-level case can be applied as well.

\subsection{Multi-level}
\label{sec:dp}

We consider the knapsack problem with multiple, discrete weight improvements (iK-dm), which can be written as
\begin{align}
\text{(iK-dm)} \hspace{.5cm} \max\ & \sum_{i=1}^n p_ix_i \\
\text{s.t. } & \sum_{i=1}^n w_i x_i + \sum_{i=1}^n\sum_{\ell=1}^{j(i)} (w_i^\ell - w_i^{\ell-1}) y_i^\ell \le B \hspace{-1cm}\\
& \sum_{i=1}^n\sum_{\ell=1}^{j(i)} (c_i^\ell- c_i^{\ell-1}) y_i^\ell \le C \\
& y_i^{j(i)} \le y_i^{j(i)-1}\le \ldots \le y_i^1 \le x_i & \forall i\in\{1,\ldots,n\} \label{gcr-a}\\
& x_i \in\{0,1\} & \forall i\in\{1,\ldots,n\} \label{gcr-int1}\\
& y_i^\ell \in \{0,1\} & \hspace{-6mm}\forall i\in\{1,\ldots,n\}, \ell\in\{1,\ldots,j(i)\} \label{gcr-int2}
\end{align}
We begin with an algorithm for integral profits, i.e., for $p_i \in \mathbb{N}$ for all $i\in\{1,\ldots,n\}$ and subsequently use
scaling techniques to obtain a more efficient variation at the expense of a $1+\epsilon$ approximation in the objective value. 
The basic idea of updating a table with relevant problem information can be found in, e.g., \cite{IbarraK75} for the knapsack problem.

\paragraph{Dynamic programming for integral profits.}

Let $W(i,q,r)$ denote the minimum weight of objects 
among $\{ x_1 , \ldots , x_i \}$ that can attain profit $r$ using a weight-improvement budget of at most $q$. The following observations are immediate. 
\begin{enumerate}
\item $W(i, 0, r)$ is the standard version of the knapsack
problem where the weights are $w_i^0$ and $r \leq P := \sum_i p_i$.
The optimal objective value is given by $\max \{ r\in\mathbb{N}\ :\ W(n,0,r)  \leq B \}$
\item $W(i,q+1,r) \leq W(i,q,r)$, i.e., more weight reductions cannot decrease the the value of the solution.
\end{enumerate}

We can now write the following recurrence for $1 \leq n, 1 \leq q \leq C, 
1 \leq r \leq P$:
%
For an item $i$, there are weight reductions with increasing 
costs $c_i^1 \leq \ldots \le c_i^{j(i)}$ that yields (decreasing) weights
$w_i^1 \geq \ldots \geq w_i^{j(i)}$.  We can now write the following dynamic
programming recurrence
\begin{equation}
W(i,q,r) = \min
\begin{cases}
W(i-1 , q, r), & \text{\small(do not use $i$)}\\[2mm]
W(i-1, q, r - p_i) + w_i^0, & \text{\small($i$ is not reduced)}\\[2mm]
 W(i-1, q- c_i^1 , r- p_i ) + w_i^1,  &  \text{\small(it costs $c_i^1$ for $w_i^1$)} \\[2mm]
W(i-1, q- c_i^2 , r- p_i ) + w_i^2,  &  \text{\small(it costs $c_i^2$ for $w_i^2$)} \\[2mm]
\ldots, &  \text{\small(it costs $c_i^\ell$ for $w_i^\ell$)} \\[2mm]
W(i-1, q- c_i^{j(i)} , r- p_i ) + w_i^{j(i)} & \text{\small(it costs $c_i^{j(i)}$ for $w_i^{j(i)}$)} \hspace{-3mm}\\
\end{cases}
\label{dynprog2}
\end{equation}
It may be noted that reducing the weight of the $i$-th item and not choosing it is worse
than the first term, and hence need not be considered.
Let $W(i, q, r) = -\infty$ for $q < 0$ so that we do not consider terms in
the dynamic programming where the improvement cost exceeds the current improvement budget.
Use the base case as $W(1,0,r) = w_1^0$ for $r = p_1$ and 0 otherwise. 
We assume that $c_i^j$ for all $i,j$ are integral and 
each entry of the table can be computed in $Q:=\max_i j(i)$ steps. The resulting dynamic programming algorithm is presented as Algorithm~\ref{algo-3}.

\begin{algorithm}[ht]
\caption{Pseudo-polynomial Algorithm for (iK-dm)}\label{algo-3}
\begin{algorithmic}[1]
\Require{ A problem instance of (iK-dm) with integer weights.}
\State  Initialize the table $W = n \times C \times P$ to $- \infty$. Set $W(1,0,r) = w_1^0$ for $r = p_1$, and 0 otherwise.
\For{ $q = 0$ to $C$ }
\For{ $i = 1$ to $n$ }
\For{ $r = 1$ to $P$ }
\State Set $W(i,q,r)$ according to Equation~\eqref{dynprog2}.
\EndFor
\EndFor
\EndFor
\State \Return  $\arg\max_r \{ W(n,C,r)  \leq B \}$.
\end{algorithmic}
\end{algorithm}

\begin{lemma}
Algorithm~\ref{algo-3} takes time $\mathcal{O}(n C Q P)$.
\end{lemma}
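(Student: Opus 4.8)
The plan is to analyze the three nested loops of Algorithm~\ref{algo-3} directly. The outer loop over $q$ runs from $0$ to $C$, contributing a factor of $C+1 = \mathcal{O}(C)$ iterations; the middle loop over $i$ runs from $1$ to $n$, contributing $\mathcal{O}(n)$ iterations; and the inner loop over $r$ runs from $1$ to $P$, contributing $\mathcal{O}(P)$ iterations. So far this gives $\mathcal{O}(nCP)$ iterations of the innermost body.

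Next I would bound the cost of a single execution of the innermost body, i.e., the cost of evaluating Equation~\eqref{dynprog2} to set one table entry $W(i,q,r)$. The right-hand side of \eqref{dynprog2} is a minimum over $2 + j(i)$ terms (the ``do not use $i$'' term, the ``$i$ not reduced'' term, and one term for each of the $j(i)$ possible weight reductions). Each such term is a table lookup plus an addition plus, for the reduction terms, a subtraction $q - c_i^\ell$ in the budget coordinate; all of these are constant-time operations assuming $\mathcal{O}(1)$ table access and arithmetic. Since $j(i) \le Q := \max_i j(i)$ by definition, computing one entry takes $\mathcal{O}(Q)$ time, exactly as already noted in the text preceding the algorithm.

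Finally I would combine these: the initialization of the table in line~1 touches $n \cdot C \cdot P$ cells and costs $\mathcal{O}(nCP) = \mathcal{O}(nCQP)$, the triple loop performs $\mathcal{O}(nCP)$ iterations each costing $\mathcal{O}(Q)$, for a total of $\mathcal{O}(nCQP)$, and the final $\arg\max_r$ over line~9 scans $P$ entries, which is $\mathcal{O}(P)$ and thus absorbed. Summing gives the claimed bound $\mathcal{O}(nCQP)$.

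There is no real obstacle here; the only points that warrant a sentence of care are (i) justifying that each term in the recurrence, including the out-of-range cases where $W(i,q,r) = -\infty$ for $q < 0$, can be handled in constant time (one simply treats such lookups as returning $-\infty$ without further computation), and (ii) noting that the base-case and initialization steps do not dominate. Everything else is a direct product of the loop bounds.
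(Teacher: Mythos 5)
Your proof is correct and follows essentially the same approach as the paper, which simply notes that each of the $nCP$ table entries is computed in $Q$ steps in an order ($q$, then $i$, then $r$ increasing) that makes all needed entries available; you merely spell out the loop-counting and the absorption of the initialization and final $\arg\max$ in more detail.
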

\begin{proof}
Each entry can be computed
in $Q$ steps where the order of computation proceeds from $q = 0$ to $C$
and for a fixed $q$, we compute the entries in increasing order of $i$ 
and $r$ (for a fixed $i$, in increasing order of $r$). 
\end{proof}

\paragraph{Faster approximation algorithms using profit scaling.}

Using profit scaling, we now convert the previous algorithm into a more 
efficient version by compromising with an approximation factor in the
objective function.   
Suppose we want to compute a solution with an objective value of at least $(1 - \epsilon )\text{iK-dm}^*$.
We use the scaling method, namely for any object $x_i$, we consider its new profit
$p'_i  = \lfloor \frac{ p_i }{K} \rfloor$ 
where $K = \frac{\epsilon \cdot p_{\max}}{n}$ and use this to run the dynamic programming equation. Note that any
$K \leq \epsilon \cdot \text{iK-dm}^* /n$ suffices for this purpose.


\begin{algorithm}[ht]
\caption{PTAS for (iK-dm)}\label{algo-fptas}
\begin{algorithmic}[1]
\Require{ A problem instance of (iK-dm), and $\epsilon > 0$.}
\State Set $K=\frac{\epsilon p_{\max}}{n}$.
Let $p'_i  = \lfloor \frac{ p_i }{K} \rfloor$
\State  Solve the instance $\text{iK-dm}(p')$ using Algorithm~\ref{algo-3}. Let $(x,y)$ be the resulting solution.
\State \Return $(x,y)$
\end{algorithmic}
\end{algorithm}

Using $p'_{\max} = \mathcal{O}( n/\epsilon )$, 
the running time of the resulting Algorithm~\ref{algo-fptas} is $\mathcal{O}(nC Q\cdot n \cdot \frac{n}{\epsilon})$.
which is similar to the classic FPTAS for Knapsack \cite{Vazirani:2001}.

%
%
%
\begin{theorem}
The dynamic programming algorithm for the knapsack problem with multiple, discrete weight improvements 
returns a solution with objective value at least $(1 -\epsilon )\text{iK-dm}^*$ in $\mathcal{O}(\frac{n^3\cdot Q C}{\epsilon})$ time.
\end{theorem}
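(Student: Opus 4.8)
The plan is to combine the correctness of the profit-scaling reduction with the running-time bound already established for Algorithm~\ref{algo-3}. The statement bundles two claims: (i) the returned solution has value at least $(1-\epsilon)\text{iK-dm}^*$, and (ii) the running time is $\mathcal{O}(n^3 Q C / \epsilon)$. I would prove them in that order.

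For the approximation guarantee, first observe that Algorithm~\ref{algo-3} applied to the scaled profits $p'_i = \lfloor p_i/K\rfloor$ returns an \emph{exact} optimum of the instance $\text{iK-dm}(p')$, because the feasible regions of $\text{iK-dm}(p)$ and $\text{iK-dm}(p')$ are identical — only the objective changes. Let $(x,y)$ be the solution returned and let $(x^*,y^*)$ be an optimal solution of the original instance, with item set $S^* = \{i : x^*_i = 1\}$ and $S = \{i : x_i = 1\}$. The standard scaling estimate gives, for any feasible item set $T$ with $|T|\le n$,
\begin{equation*}
K\sum_{i\in T} p'_i \;\le\; \sum_{i\in T} p_i \;\le\; K\sum_{i\in T} p'_i + |T|K \;\le\; K\sum_{i\in T}p'_i + nK .
\end{equation*}
Applying the lower bound to $T = S$, the upper bound to $T = S^*$, and the optimality of $(x,y)$ for the scaled instance ($\sum_{i\in S}p'_i \ge \sum_{i\in S^*}p'_i$), we obtain
\begin{equation*}
\sum_{i\in S} p_i \;\ge\; K\sum_{i\in S}p'_i \;\ge\; K\sum_{i\in S^*}p'_i \;\ge\; \sum_{i\in S^*}p_i - nK \;=\; \text{iK-dm}^* - \epsilon\, p_{\max},
\end{equation*}
using $nK = \epsilon\, p_{\max}$. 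Since a single most-profitable item is always a feasible solution (it certainly fits if it appears in any feasible set; and if no single item fits, then $\text{iK-dm}^*=0$ and the claim is trivial), we have $\text{iK-dm}^* \ge p_{\max}$, so the right-hand side is at least $(1-\epsilon)\text{iK-dm}^*$. This is exactly the point flagged in the text that any $K \le \epsilon\cdot\text{iK-dm}^*/n$ suffices.

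For the running time, I would invoke the lemma stating Algorithm~\ref{algo-3} runs in $\mathcal{O}(nCQP)$ time, applied to the scaled instance, where the relevant total profit is $P' = \sum_i p'_i \le n\, p'_{\max} \le n\cdot (p_{\max}/K) = n\cdot (n/\epsilon) = n^2/\epsilon$. Substituting $P'$ for $P$ yields $\mathcal{O}(nCQ\cdot n^2/\epsilon) = \mathcal{O}(n^3 QC/\epsilon)$, as claimed. One should also note the preprocessing (computing the $p'_i$, finding $p_{\max}$) is $\mathcal{O}(n)$ and absorbed.

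The main obstacle — really the only subtle point — is ensuring the bound $\text{iK-dm}^* \ge p_{\max}$, i.e.\ that the item achieving $p_{\max}$ can be packed on its own. This needs a brief case distinction: if that item cannot be packed alone (its original weight, even fully improved within budget $C$, exceeds $B$), it cannot appear in any feasible solution, so one should restrict attention to items that are individually packable; after discarding the unpackable items, $p_{\max}$ refers to the largest profit among packable items and $\text{iK-dm}^* \ge p_{\max}$ holds (unless the instance is entirely infeasible beyond the empty set, in which case $\text{iK-dm}^*=0$ and there is nothing to prove). Everything else is the routine scaling computation above.
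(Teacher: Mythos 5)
Your proposal is correct and follows the same profit-scaling route that the paper sketches (the paper gives no formal proof beyond the surrounding text, which states the choice $K=\epsilon p_{\max}/n$, notes that any $K\le\epsilon\,\text{iK-dm}^*/n$ suffices, and substitutes $p'_{\max}=\mathcal{O}(n/\epsilon)$ into the $\mathcal{O}(nCQP)$ bound). Your writeup supplies the details the paper omits, in particular the justification that $\text{iK-dm}^*\ge p_{\max}$ after discarding individually unpackable items, which is exactly the point needed to make the stated choice of $K$ valid.
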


\begin{remark}
If the total improvement budget $C$ is bounded by a polynomial in $n$, this is even an FPTAS. For general $C$, no FPTAS exists, as can be easily seen by a reduction from the 2-partition problem.
\end{remark}

\paragraph{A polynomial-time 3-approximation algorithm.}
We now present a polynomial time approximation algorithm for (iK-dm).
This is achieved at a cost of relaxing the approximation to factor $3$.

To this end, we consider the LP relaxation obtained by relaxing constraints (\ref{gcr-int1}) and (\ref{gcr-int2}) to
\begin{align}
& x_i  \le  1 & \forall i\in\{1,\ldots,n\} \label{gcr-b2}\\
& y_i^{j(i)} \ge 0 & \forall i\in\{1,\ldots,n\} \label{gcr-c}
\end{align}
Thus, there are $j(i)+2$ constraints associated with every item -- obtained from constraints (\ref{gcr-b2}),  (\ref{gcr-c}) above combined with
constraints (\ref{gcr-a})  recalled below: 
\begin{align*}
&y_i^1 \le x_i & \forall i\in\{1,\ldots,n\} \\
& y_i^{\ell+1} \le y_i^{\ell} & \forall i\{1,\ldots,n\},\ \ell\in\{1,\ldots,j(i)-1 \}
\end{align*}
In addition, we have the knapsack constraints w.r.t. $B$ and $C$. Therefore, the total number of constraints is $$2 + \sum_{i=1}^n \left( j(i)+2 \right).$$
As there are $j(i)+1$ variables associated with every item, the total number of variables is
$$\sum_{i=1}^{n} (j(i)+1).$$
Moreover the LP is bounded. 
Therefore the number of tight constraints in an optimal basic feasible solution must be
$$\sum_{i=1}^{n} (j(i)+1).$$
This implies that at most $n+2$ constraints can be non-tight in a basic feasible solution.
Let us see how the items contribute non-tight constraints.
The important observation is that for any item $i$, all $j(i)+2$ constraints cannot be simultaneously tight as this would imply that
$$0 = y_i^{j(i)} = \ldots = y_i^{\ell+1} = y_i^{\ell} = \ldots = x_i=1$$ which is not possible.
Thus every item must contribute at least one non-tight constraint. Since the total number of non-tight constraints can be at most $n+2$,
at most two items can contribute more than one non-tight constraint; all the remaining items must contribute only
one non-tight constraint.

Now consider an item that contributes exactly one non-tight constraints. Then one of the cases holds
depending on which constraint is non-tight:
\begin{itemize}
\item{} If $y_i^{j(i)} > 0$, then $$y_i^{j(i)} = \ldots = y_i^{1} = x_i=1.$$
\item{} If $y_i^{\ell+1} < y_i^\ell$ for some $1 \le \ell \le j(i)-1$, then $$0 = y_i^{j(i)} = \ldots = y_i^{\ell+1} \mbox{ and } y_i^\ell = \ldots = y_i^1 = x_i=1.$$
\item{} If $y_i^1 < x_i$, then $$0 = y_i^{j(i)} = \ldots = y_i^1 \mbox{ and } x_i = 1.$$
\item{} If $x_i < 1$, then $$0 = y_i^{j(i)} = \ldots = y_i^1 = x_i.$$
\end{itemize}
Thus, if an item contributes exactly one non-tight constraint, then all the variables
associated with this item must be integral. We call such items to be {\em integral}.

Now, since at most two items can contribute more than one non-tight constraint, it implies that there can be at most two items that are not integral.
We create three integral solutions from the LP solution: One consisting of all the integral items in the LP solution and one each corresponding to the
two items that are not integral. Clearly the one with the best profit is a $3$-approximate solution. We summarize this approach as Algorithm~\ref{algo-polynew}.

\begin{algorithm}[ht]
\caption{}\label{algo-polynew}
\begin{algorithmic}[1]
\Require{ A problem instance of (iK-dm).}
\State Compute an optimal basic solution of the LP relaxation of (iK-dm). Let $J$ be the indices of integral items that are packed, and let $Z$ denote the accompanying vector of integral improvements. Let $x_{f_1}$ and $x_{f_2}$ denote the fractional items of the solution, if they exist.
\State \Return $\arg\max\{ p(J,Z), p(x_{f_1}), p(x_{f_2}) \}$
\end{algorithmic}
\end{algorithm}

\begin{theorem}
There is a 3-approximation algorithm for (iK-dm) that runs in polynomial time.
\end{theorem}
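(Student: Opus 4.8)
The plan is to run Algorithm~\ref{algo-polynew} and verify that it meets the claimed guarantees, leaning on the structural analysis of the LP relaxation that precedes the algorithm. The two things to establish are (i) correctness and the factor-$3$ guarantee, and (ii) polynomial running time.

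First I would address running time. Computing an optimal \emph{basic} feasible solution of the LP relaxation of (iK-dm) can be done in polynomial time by any polynomial LP solver (e.g.\ the ellipsoid or interior-point method, followed by a polynomial basic-solution recovery / purification step if the solver does not already return a vertex). The number of variables and constraints is polynomial in the input size since $\sum_i (j(i)+1)$ is bounded by the input length; extracting $J$, $Z$, $x_{f_1}$, $x_{f_2}$ and comparing the three candidate profits is clearly polynomial. So Step~1 and Step~2 of Algorithm~\ref{algo-polynew} run in polynomial time.

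Next I would argue the approximation factor, reusing the structural facts already derived above. Let $(x,y)$ be the optimal basic feasible solution of the LP relaxation, with LP optimum $\text{LP}^*\ge \text{iK-dm}^*$. By the counting argument given before the algorithm, at most two items are non-integral; denote them $f_1,f_2$ (if fewer exist the argument only gets easier). Splitting the LP objective,
\[
\text{LP}^* \;=\; \sum_{i \in J} p_i x_i \;+\; p_{f_1} x_{f_1} \;+\; p_{f_2} x_{f_2}
\;\le\; p(J,Z) \;+\; p_{f_1} \;+\; p_{f_2},
\]
using $x_{f_1},x_{f_2}\le 1$ and that on the integral items the LP solution coincides with the integral packing $(J,Z)$. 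Hence one of $p(J,Z)$, $p_{f_1}$, $p_{f_2}$ is at least $\text{LP}^*/3 \ge \text{iK-dm}^*/3$. It remains to check that each of the three candidates corresponds to a genuinely feasible integral solution of (iK-dm): the pair $(J,Z)$ restricted to integral items satisfies all the coupling constraints \eqref{gcr-a} (each integral item already satisfies its chain of constraints in the LP, and dropping the two fractional items only frees budget in \eqref{gcr-int1}/\eqref{gcr-int2}), and the single-item solutions $\{x_{f_1}=1\}$ and $\{x_{f_2}=1\}$ (with all improvement variables $0$) are trivially feasible provided $w_{f_i}\le B$, which holds because $x_{f_i}>0$ in a feasible LP solution forces $w_{f_i}\le B$ — if $w_{f_i} > B$ then no feasible solution packs item $f_i$ at all and we may simply drop it. So $\arg\max\{p(J,Z),p_{f_1},p_{f_2}\}$ is a feasible solution of value at least $\text{iK-dm}^*/3$, proving the theorem.

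The main obstacle, and the only place that needs care, is the feasibility bookkeeping for the two single-item candidate solutions: one must make sure that extracting a single fractional item $f_i$ and rounding it up does not violate the knapsack constraint~\eqref{gcr-int1}. This is handled by the observation that an item with $w_{f_i}>B$ can never be used and can be preprocessed away, so that any item appearing fractionally in the LP solution individually fits; the improvement budget constraint~\eqref{gcr-int2} is satisfied vacuously since we set all $y$-variables of that singleton solution to $0$. Everything else is a direct restatement of the pre-algorithm analysis.
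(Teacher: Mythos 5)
Your overall route is the paper's own: solve the LP relaxation of (iK-dm), use the tight-constraint counting argument to conclude that at most two items are non-integral, and return the best of the three candidates $p(J,Z)$, $p_{f_1}$, $p_{f_2}$. The averaging bound $\mathrm{LP}^*\le p(J,Z)+p_{f_1}+p_{f_2}$, the feasibility of $(J,Z)$, and the polynomial running time are all fine. However, there is a genuine gap in your feasibility argument for the two singleton candidates. You claim that $x_{f_i}>0$ in a feasible LP solution forces $w_{f_i}\le B$, and that otherwise the item ``can never be used and can be preprocessed away.'' Both halves are false for the improvable problem: the net contribution of item $i$ to the weight constraint is $w_i x_i+\sum_{\ell}(w_i^\ell-w_i^{\ell-1})y_i^\ell$, and the increments $w_i^\ell-w_i^{\ell-1}$ are \emph{nonpositive}, so an item with unimproved weight $w_i>B$ can appear with $x_i>0$ in a feasible LP solution (e.g.\ $w_i=20$, $w_i^1=5$, $B=10$, $x_i=y_i^1=1$), and such an item can also be used by the integral optimum in improved form. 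Consequently your proposed preprocessing (delete every item with $w_i>B$) can strictly decrease $\text{iK-dm}^*$ and destroys the bound $\mathrm{LP}^*\ge\text{iK-dm}^*$ on the reduced instance, while your singleton solution ``$x_{f_i}=1$ with all improvement variables $0$'' may simply be infeasible.

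The repair is not hard, but it is a different statement from the one you made: for the singleton candidate, pack $f_i$ at some improvement level $\ell$ with $w_{f_i}^\ell\le B$ and $c_{f_i}^\ell\le C$ (the profit is still $p_{f_i}$, so the averaging step is unaffected), and preprocess away only those items for which \emph{no} level $\ell\in\{0,\ldots,j(i)\}$ satisfies both budget conditions --- such items cannot occur in any integral solution, so removing them preserves $\text{iK-dm}^*$ and the LP upper bound. This extra step is genuinely needed: a fractional LP value $x_{f_i}>0$ does not by itself guarantee that item $f_i$ fits alone at \emph{any} improvement level, so without the (corrected) preprocessing the three candidates are not all guaranteed to be feasible. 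The paper itself is silent on this point, so you were right to try to fill the gap --- but the condition you used is the wrong one.
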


\section{Improved Results for Unit Improvement Costs}

\label{sec:spec}

\subsection{A Linear-time 3-Approximation Algorithm}

We consider the single-level weight improvement case with all improvements costs equal to 1.
We develop an approach that is based on creating a
cardinality-constrained knapsack (CKP) problem. 
In particular, given an instance of (iK-ds) with unimproved weights $w$ and improved weights $\what$, we create a CKP
instance as in the formulation of (iK-ds')
by doubling all items; i.e., we create an instance consisting
of $2n$ items, where the first $n$ items have weight $w$, and the
next $n$ items have weight $\what$. As a slight modification of the
original CKP definition, we assume that the cardinality constraint
only applies to the items with weight $\what$. The problem we consider is 
denoted as
\begin{align*}
\text{(CKP')} \hspace{1.5cm} \max\ &\sum_{i=1}^n p_i( x_i + \hat{x}_i) \\
\text{s.t.}\ &\sum_{i=1}^n w_i x_i + \sum_{i=1}^n \hat{w}_i \hat{x}_i \le B\\ 
&\sum_{i=1}^n \hat{x}_i \le k\\ 
&x_i \in \{0,1\} & \forall i\in\{1,\ldots,n\} \\
&\hat{x}_i \in\{0,1\} & \forall i\in\{1,\ldots,n\}
\end{align*}
CKP' is a relaxation of (iK-ds'), as the coupling constraints~\eqref{linear1-lp4} are ignored. Hence $\text{CKP'}^* \ge \text{iK-ds}^*$. 
Solving the LP-relaxation of CKP' 
results 
in a basic solution with a set of integer variables $J^*_I = J_I \cup \hat{J}_I$ 
and a set of fractional variables $J^*_F$. Note that, as before, $|J^*_F| \le 2$. 

\begin{lemma}
Let $(x,\hat{x})$ be a basic solution of the LP relaxation of CKP'. 
If there are two fractional variables, then these are $\hat{x}_i$ and $\hat{x}_j$ with $\hat{x}_i + \hat{x}_j = 1$ for some $i,j$.
\end{lemma}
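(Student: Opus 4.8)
The plan is to use that a basic solution of the LP relaxation of CKP' is a vertex of the feasible polytope in $\R^{2n}$, hence the unique point cut out by some $2n$ linearly independent tight constraints drawn from the budget constraint, the cardinality constraint, and the $4n$ box constraints $0\le x_i\le 1$, $0\le\hat x_i\le 1$. The first step is a counting step analogous to Lemma~\ref{lppk}: a fractional variable satisfies neither of its box constraints with equality, so each of the $2n$ defining tight constraints is either the budget constraint, the cardinality constraint, or a box constraint of a non-fractional variable; since no variable can have both its box constraints tight, this gives $2n\le 2+(2n-|J^*_F|)$, recovering $|J^*_F|\le 2$. Moreover, when $|J^*_F|=2$ this chain is tight, so \emph{both} the budget and the cardinality constraints are among the tight ones (hence both tight), and each of the $2n-2$ non-fractional variables contributes exactly one tight box constraint, i.e.\ equals $0$ or $1$.

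Second, I would rule out that both fractional variables are of type $x_i$. Write $J^*_F=\{u,v\}$. Up to signs, the $2n$ tight normals are the unit vectors $e_w$ for the $2n-2$ non-fractional variables $w$, together with the budget normal $(w_1,\dots,w_n,\hat w_1,\dots,\hat w_n)$ and the cardinality normal $(0,\dots,0,1,\dots,1)$ (ones on the $\hat x$-coordinates). These are linearly independent iff the $2\times 2$ matrix formed by the $u$- and $v$-entries of the budget and cardinality normals is nonsingular. If both $u$ and $v$ were $x$-variables, the cardinality normal has a $0$ in each of these coordinates, so that matrix has a zero row and $(x,\hat x)$ is not a vertex -- a contradiction. (Equivalently, moving along the direction $e_{x_u}-\tfrac{w_u}{w_v}e_{x_v}$ would preserve every tight constraint, which is impossible at a vertex.) Hence at least one of $u,v$ is an $\hat x$-variable.

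Third, I would finish with an integrality/parity argument. By Step~1 the cardinality constraint is tight, $\sum_{\ell=1}^n\hat x_\ell=k$ with $k$ integral, and every non-fractional $\hat x_\ell$ equals $0$ or $1$, so the non-fractional $\hat x$-variables contribute an integer to the sum; therefore the fractional $\hat x$-variables sum to an integer as well. A single value in $(0,1)$ is not an integer, so there must be at least two fractional $\hat x$-variables; combined with $|J^*_F|\le 2$ this forces $J^*_F=\{\hat x_i,\hat x_j\}$ for some $i,j$, with $\hat x_i+\hat x_j\in\mathbb{Z}\cap(0,2)=\{1\}$.

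The delicate step is the second one: it is exactly here that one must invoke the vertex (basic) property rather than mere feasibility, and one should dispose of the degenerate case $w_v=0$ -- an item of zero unimproved weight can be assumed packed integrally in its unimproved form, hence is never fractional -- so that dividing by $w_v$ (or the nonsingularity claim) is legitimate. I would also flag at the outset that $k$ is an integer (being a cardinality bound), so that the parity argument in Step~3 is valid, and that ``basic solution'' is read in the usual sense of a basic feasible solution, so its tight constraints indeed form a rank-$2n$ system.
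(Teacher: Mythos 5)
Your proof is correct, and it takes a genuinely different route from the paper's. The paper argues by a case analysis on the types of the two fractional variables and disposes of the bad cases ($x_i,x_j$ fractional; $x_i,\hat x_j$ fractional; $\hat x_i,\hat x_j$ fractional with slack in the cardinality constraint) by a local exchange argument: perturb the two fractional variables along the direction that preserves the budget constraint, which is possible in both senses and so contradicts the solution being a (basic, optimal) vertex; only the case where both fractional variables are $\hat x$'s and the cardinality constraint is tight survives, whence $\hat x_i+\hat x_j=1$. You instead run a pure polyhedral rank count (in the spirit of Lemma~\ref{lppk} and of Table~\ref{tab-1} later in the paper): with two fractional coordinates, all $2n$ tight constraints are accounted for, forcing both side constraints to be tight, and the $2\times 2$ minor of the budget and cardinality normals on the fractional coordinates must be nonsingular, which kills the two-$x$ case; the parity of the tight cardinality constraint then kills the mixed case and yields $\hat x_i+\hat x_j=1$. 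Your version has the advantage of applying to \emph{any} basic feasible solution without invoking optimality or an informal ``improve until a bound is hit'' step, and it makes explicit the hypothesis that $k$ is integral, which the paper's cases (2.) and (4.) also use silently. Two small remarks: your worry about $w_v=0$ is unnecessary for the determinant formulation, since in the two-$x$ case the cardinality row of the $2\times2$ minor is identically zero regardless of the weights (only the alternative ``move along $e_{x_u}-\tfrac{w_u}{w_v}e_{x_v}$'' phrasing needs $w_v\neq 0$); and your Step~3 alone does not dispose of the two-$x$ case (a tight cardinality constraint with all $\hat x_\ell$ integral is consistent), so Step~2 is genuinely needed --- which you correctly have.
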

\begin{proof}
Let there be two fractional variables. We consider the following cases:
\begin{enumerate}
\item[(1.)] If $x_i$ and $x_j$ are fractional, we can improve the solution by increasing the variable with better profit to weight ratio, and decreasing the other, until one of them is either $0$ or $1$.
\item[(2.)] If $x_i$ and $\hat{x}_j$ are fractional, the cardinality constraint cannot be tight.
We hence can improve the solution by increasing the variable with better profit to weight ratio 
as in $1$. until either one of the variables reaches $0$ or $1$.
\item[(3.)] If $\hat{x}_i$ and $\hat{x}_j$ are fractional, and the cardinality constraint is not tight, we may proceed as in (2.), until one of the variables reaches $0$ or $1$, or the cardinality constraint becomes tight.
\item[(4.)] If $\hat{x}_i$ and $\hat{x}_j$ are fractional, and the cardinality constraint is tight, we have  $\hat{x}_i + \hat{x}_j = 1$.
\end{enumerate}
\end{proof}

We use these properties to construct the following feasible solutions for (iK-ds):
\begin{enumerate}
\item If $J^*_F = \emptyset$, we construct the two solutions $(J_I,\emptyset)$ and $(\emptyset,\hat{J}_I )$.
\item If $J^*_F = \{ i \}$, we use the three solutions $(\emptyset, \{i\})$, $(J_I,\emptyset)$, and $(\emptyset,\hat{J}_I )$.
\item Finally, if $J^*_F = \{i,j\}$, where w.l.o.g. $\what_i \ge \what_j$, we use $(\emptyset, \{i\})$, $(J_I,\emptyset)$, and $(\emptyset,\hat{J}_I\cup \{j\} )$.
\end{enumerate}

Note that these solutions are feasible for (iK-ds), and the sum of their objective values is 
larger than $\text{iK-ds}^*$. Thus, choosing the solution with the maximal objective value yields a 3-approximation. 
We recapitulate this approach in Algorithm~\ref{algo-new}.

\begin{algorithm}[ht]
\caption{}\label{algo-new}
\begin{algorithmic}[1]
\Require{ A problem instance of (iK-ds) with unit improvement costs.}
\State Solve the LP relaxation of CKP'.
Let $J^*_I=J_I \cup \hat{J}_I$ and $J^*_F$ 
denote the item indices with integer values packed with original or reduced weights, 
and the item indices with fractional values in a basic solution.
\If{$J^*_F=\emptyset$}
  \State \Return $\arg\max\{ p(J_I,\emptyset), p(\emptyset,\hat{J}_I) \}$.
\ElsIf{$J^*_F=\{i\}$ for some $i\in\{1,\ldots,n\}$}
\State \Return $\arg\max\{ p(\emptyset, \{i\}), p(J_I,\emptyset), p(\emptyset,\hat{J}_I ) \}$
\ElsIf{$J^*_F=\{i,j\}$ for some $i,j\in\{1,\ldots,n\}$ with $\what_i \ge \what_j$}
\State \Return $\arg\max\{ p(\emptyset, \{i\}), p(J_I,\emptyset), p(\emptyset,\hat{J}_I\cup \{j\} ) \}$
\EndIf
\end{algorithmic}
\end{algorithm}

Note that the LP relaxation of CKP' can be solved in linear time \cite{Megiddo93lineartime}. Thus we can state the following theorem.

\begin{theorem}
Algorithm~\ref{algo-new} has an approximation ratio of at most 3 for (iK-ds) with unit improvement costs, and runs in linear time.
\end{theorem}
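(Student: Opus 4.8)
The plan is to combine the structural lemma on the LP basic solution of CKP$'$ with the three explicit feasible solutions listed above, and then account for the running time. First I would recall that CKP$'$ is a relaxation of (iK-ds$'$), so $\text{CKP}'^* \ge \text{iK-ds}^*$, and that (iK-ds$'$) is just a reformulation of (iK-ds); hence $\text{CKP}'^* \ge \text{opt}$ where $\text{opt}$ is the optimal value of (iK-ds). Next I would solve the LP relaxation of CKP$'$ and invoke the preceding lemma: the basic solution has at most two fractional variables, and in the two-fractional case they are $\hat{x}_i,\hat{x}_j$ with $\hat{x}_i+\hat{x}_j=1$. Using the case distinction on $J^*_F$, I would verify that in each case the constructed solutions $(J_I,\emptyset)$, $(\emptyset,\hat{J}_I)$, $(\emptyset,\{i\})$, $(\emptyset,\hat{J}_I\cup\{j\})$ are genuinely feasible for (iK-ds): the first two drop items so the weight budget is still satisfied and no $y$-improvement is used; $(\emptyset,\{i\})$ packs one improved item whose improved weight is at most $B$ since $x^{LP}$ fits within $B$; and for the tight-cardinality case, since $\hat w_i \ge \hat w_j$, replacing the fractional pair by $\hat x_j=1$ only adds weight at most $\hat x_i\hat w_i + \hat x_j\hat w_j \ge \hat w_j$ relative to the integral part, so $(\emptyset,\hat J_I\cup\{j\})$ stays within $B$ (and the cardinality constraint, which corresponds to $\sum y_i\le C=k$, is respected since we only count $k-|\hat J_I|\ge 1$ remaining slots, but in (iK-ds) there is actually no such constraint so feasibility is immediate).

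Then I would argue the approximation bound: in every case the sum of the objective values of the constructed solutions is at least the LP optimum of CKP$'$. Indeed, in case 1 the two solutions together cover all integral variables; in case 2 they additionally cover the single fractional variable, whose profit is counted (at most) once; in case 3, $(\emptyset,\{i\})$ contributes $p_i \ge \hat x_i p_i$ and $(\emptyset,\hat J_I\cup\{j\})$ contributes the integral profit plus $p_j \ge \hat x_j p_j$, so together with $(J_I,\emptyset)$ one gets at least $\sum p_i x^{LP}_i + \sum p_i \hat x^{LP}_i = \text{CKP}'^*$. Hence
\[
3\max\{\dots\} \ \ge\ \text{CKP}'^* \ \ge\ \text{iK-ds}^* \ =\ \text{opt},
\]
so the solution returned by Algorithm~\ref{algo-new} has value at least $\text{opt}/3$.

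Finally, for the running time I would note that the LP relaxation of CKP$'$ has a fixed number of nontrivial constraints (the knapsack constraint on $B$ and the cardinality constraint), so by Megiddo's linear-time fixed-dimension LP result \cite{Megiddo93lineartime} it can be solved in $\mathcal{O}(n)$ time; recovering the integral/fractional index sets and evaluating the three candidate profits is also linear, which gives overall linear time. The main obstacle I anticipate is the feasibility check in case 3: one must carefully bound the total weight of $(\emptyset,\hat J_I\cup\{j\})$ against $B$, using both that the LP solution fits within $B$ and the ordering $\hat w_i \ge \hat w_j$ to ensure that swapping the fractional mass of $i$ for a full unit of $j$ does not overflow the knapsack; everything else is bookkeeping over the case distinction.
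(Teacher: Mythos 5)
Your proposal matches the paper's argument essentially step for step: the same relaxation bound $\text{CKP}'^* \ge \text{iK-ds}^*$, the same case analysis on $J^*_F$ via the structural lemma on basic solutions, the same three candidate solutions whose profits sum to at least the LP optimum (hence the optimum of (iK-ds)), and the same appeal to Megiddo's fixed-dimension LP result for linear running time. One small correction: your parenthetical claiming that (iK-ds) has no cardinality constraint is wrong --- with unit improvement costs the budget constraint $\sum_{i} c_i y_i \le C$ \emph{is} the cardinality constraint $\sum_i y_i \le k$ --- but the argument you give just before it (tightness of the LP cardinality constraint forces $|\hat{J}_I| + \hat{x}_i + \hat{x}_j = k$, so $\hat{J}_I \cup \{j\}$ uses exactly $k$ improvements) is the correct one, so the proof stands.
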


\subsection{A Polynomial-time 2-Approximation Algorithm}

We now show that a factor $2$ approximation for the unit improvement case can be achieved by running in polynomial time.
Recall that for the generalized case, we are able to achieve a factor $3$-approximation algorithm by 
considering the LP relaxation of the problem and characterizing the basic feasible solutions of the relaxed LP.
We show that for the special case of one improvement per item with unit costs, we can better characterize the basic feasible solutions 
of the relaxed LP yielding an improved factor $2$ approximation.
For this, we consider the the LP relaxation of (\ref{linear2-lp1}--\ref{linear2-lp5}). 
Note that the linear-time result of \cite{Megiddo93lineartime} does not apply here due to the 
non-constant 
number of constraints. The LP relaxation can be written as:
\begin{align}
\label{linear-lp2}
\max\ &\sum_{i=1}^n c_ix_i\\
\label{k-cr}
\text{s.t.}\ &\sum_{i=1}^n w_i x_i \le B + \sum_{i=1}^n \overline{w}_i y_i\\
\label{q-cr}
&\sum_{i=1}^n y_i \le k\\
\label{cr-a}
&y_i \le x_i & \forall i\in \{1,\ldots,n\}\\
\label{cr-b}
&x_i \le 1 & \forall i\in\{1,\ldots,n\}\\
\label{cr-c}
&y_i \ge 0 & \forall i\in\{1,\ldots,n\}
\end{align}
The LP has $2n$ variables and $3n+2$ constraints comprising of the knapsack-constraint (\ref{k-cr}), 
the $k$-constraint (\ref{q-cr}) and three constraints for each item, (\ref{cr-a}), (\ref{cr-b}) and (\ref{cr-c}). 
Observe that the item constraints imply that the feasible region is bounded.
For any 
basic feasible solution there must be $2n$ linearly independent constraints that are tight.
We categorize the items based on the number of tight constraints among (\ref{cr-a}),(\ref{cr-b}),
and (\ref{cr-c}) it can contribute, see Table~\ref{tab-1}.

\begin{table*}
\centering
\begin{tabular}{|c|c|c|c|c|}
\hline
\ Case \ & \ Type \ & \ Num of Tight \ & \ Tight Constraints \ & \ Num of  non-integral  \ \\
 & & \ Constraints \ &  & \ variables \ \\
\hline
i & T1 & 0 & None & 2 \\
ii & T2 & 1 & $x_i=y_i$ & 2 \\
iii & T3 & 1 & $y_i=1$ & 1 \\
iv & T3 & 1 & $x_i=1$ & 1 \\
v & T4 & 2 & $x_i=1, y_i=0$ & 0 \\
vi & T4 & 2 & $y_i=0, x_i=y_i$ & 0 \\
vii & T4 & 2 & $x_i=1, x_i=y_i$ & 0 \\
viii & T5 & 3 & \ \ $y_i=0, x_i=y_i, x_i=1$ \ \ & Not Possible \\
\hline
\end{tabular}
\caption{Item categorization.}\label{tab-1}
\end{table*}

We observe that an item cannot contribute more than two tight constraints, i.e., constraints
(\ref{cr-a}), (\ref{cr-b}) and (\ref{cr-c}) cannot simultaneously be all tight for the same 
item (case viii). 

We consider two scenarios: either the $k$-constraint (\ref{q-cr}) is tight or not.

In case it is not tight, then discounting the knapsack constraint, we see that  
$2n-1$ of the tight constraints must be constraints of type
(\ref{cr-a}), (\ref{cr-b}) and (\ref{cr-c}). 
This implies that at least $n-1$ items must be of type T4. 
Therefore $n$ items can contribute at least $2n-1$ tight constraints only under one of the following scenarios:
\begin{itemize}
\item[A.] $n$ items of type T4
\item[B.] $(n-1)$ items of type T4 and $1$ item of type T1, T2 or T3.
\end{itemize}

In case, the $k$-constraint is tight, then
discounting the $k$-constraint and the knapsack constraint, we see that $2n-2$ of the tight constraints 
must come from constraints of type
(\ref{cr-a}), (\ref{cr-b}) and (\ref{cr-c}). 
This implies that at least $n-2$ items must be of type T4. 
Therefore, $n$ items can contribute at least $2n-2$ tight constraints only under one of the following scenarios:
\begin{itemize}
\item[C.] $n$ items of type T4
\item[D.] $(n-1)$ items of type T4 and $1$ item of type T1, T2 or T3.
\item[E.] $(n-2)$ items of type T4 and $2$ items of type T2 or T3
\end{itemize}

In Cases A and C, all the variables are integral and therefore the solution is 
integral yielding the exact optimal.

In Cases B and D, we form 2 solutions -- one consisting of all the type T4 items (which are already integral) and
the other consisting of the remaining item that is either of type T1, T2 or T3 in the weight-reduced form. 
The first solution is clearly integral feasible, as it is a subset of the fractional optimal. The second solution is integral as
every item under consideration is feasible in its weight-reduced form.
We simply pick the better of the two solutions yielding a 2-approximate solution.

In case E, let $i$ and $j$ be the two items of type T1/T2/T3.
We note that the $k$-constraint must be tight. Thus, we have that $y_i + y_j=1$.
Without loss of generality, let $\overline{w}_i \le \overline{w}_j$.
We therefore form two solutions -- one consisting of all the type T4 items along with $i$ in weight-reduced form and the other consisting of
$j$ in weight-reduced form. We again pick the best of the two
solutions to yield a 2-approximation.

Thus we obtain a 2-approximation algorithm.
Note that unlike the $3$-approximation algorithm for the generalized case, the relaxation to unit costs allows us to utilize the tightness of the $k$-constraint
in a meaningful way to obtain a better approximation.

\begin{algorithm}[ht]
\caption{}\label{algo-2}
\begin{algorithmic}[1]
\Require{ A problem instance of (iK-ds) with unit improvement costs.}
\State Compute an optimal basic solution of the LP relaxation of (iK-ds). Let $(J^w_i,J^{\overline{w}}_i)$, $i=1,2,3,4$, denote the unimproved and improved item indices of type $T_i$, respectively.
\If{$|T_4| = n$}
\State \Return the (optimal) iK-ds solution $(J^w_4,J^{\overline{w}}_4)$.
\ElsIf{$|T_4| = n-1$ and $|T_1\cup T_2 \cup T_3| = \{i\}$}
\State \Return $\arg\max \{ p(J^w_4,J^{\overline{w}}_4),\ p(\emptyset, \{i\})\}$.
\ElsIf{$|T_4| = n-2$ and $|T_2 \cup T_3| = \{i,j\}$}
\State W.l.o.g., let $\overline{w}_i \le \overline{w}_j$.
\State \Return $\arg\max \{ p(J^w_4,J^{\overline{w}}_4\cup\{i\}),\ p(\emptyset, \{j\})\}$.
\EndIf
\end{algorithmic}
\end{algorithm}

\begin{theorem}
Algorithm~\ref{algo-2} has an approximation ratio of at most 2 for (iK-ds) with unit improvement costs, and runs in polynomial time; more specifically, in time required to solve an LP.
\end{theorem}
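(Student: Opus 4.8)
The claim is that Algorithm~\ref{algo-2} is a polynomial-time $2$-approximation for (iK-ds) with unit improvement costs. The plan is to verify three things: (i) the algorithm is well-defined, i.e. the case distinction on the structure of the LP basic optimal solution is exhaustive; (ii) each returned solution is feasible for (iK-ds); and (iii) the best of the returned solutions captures at least half of the LP optimum, which is itself an upper bound on $\text{iK-ds}^*$.

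\textbf{Step 1: The case analysis is exhaustive.} First I would take an optimal basic feasible solution $(x,y)$ of the LP relaxation \eqref{linear-lp2}--\eqref{cr-c}. Since the feasible region is bounded (the constraints $x_i\le 1$, $y_i\le x_i$, $y_i\ge 0$ force $0\le y_i\le x_i\le 1$), there are $2n$ linearly independent tight constraints. I would then argue, as in Table~\ref{tab-1}, that no item can have all three of \eqref{cr-a}, \eqref{cr-b}, \eqref{cr-c} tight, because $y_i=0$, $y_i=x_i$, $x_i=1$ are jointly contradictory; so each item contributes at most two tight constraints among its own three, and an item contributing two is integral (type T4), one contributing one is ``half-integral'' in a controlled way (types T2, T3), and one contributing none is fully fractional (T1). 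Then the counting argument splits on whether the $k$-constraint \eqref{q-cr} is tight. If not, the $n$ items must supply at least $2n-1$ of the item constraints, forcing all but at most one item to be T4 (cases A, B). If it is tight, they must supply at least $2n-2$, forcing all but at most two items to be T4 (cases C, D, E). I would note that in case E the two non-T4 items cannot be of type T1: a T1 item has two fractional variables but needs the support of two tight item-constraints to stay in a basic solution, so with only $2n-2$ item constraints available and $n-2$ consumed entirely by the T4 items, two remaining items can jointly contribute at most two tight item-constraints, which forces each of them to be T2 or T3 (one tight constraint each). Hence the listed cases A--E are the only possibilities, matching the branches of Algorithm~\ref{algo-2}.

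\textbf{Step 2: Feasibility of the returned solutions.} In cases A/C the LP solution is integral, hence is itself a feasible (indeed optimal) solution of (iK-ds). In cases B/D, the solution $(J^w_4,J^{\overline{w}}_4)$ is a sub-vector of the integral part of a feasible LP point, so it satisfies \eqref{linear2-lp2}--\eqref{linear2-lp4}; the singleton solution $(\emptyset,\{i\})$ is feasible because packing a single improved item $i$ uses weight $\overline{w}_i\le w_i$ which fits in $B$ (one may assume each single item fits, else it is irrelevant), uses one unit of improvement budget, and the improved item trivially satisfies its own coupling constraint. In case E, since the $k$-constraint is tight and $i,j$ each contribute exactly one tight item-constraint, I would check that $y_i+y_j=1$ (the remaining $y$-mass beyond the $n-2$ T4 items equals $k$ minus the number of improved T4 items, which is $1$), so adding $i$ in improved form to the T4 solution keeps the improvement-budget constraint satisfied, and adding its weight keeps \eqref{linear2-lp2} satisfied because $\overline{w}_i\le \overline{w}_j$ and the fractional solution already paid for $y_i\overline{w}_i+y_j\overline{w}_j \ge (y_i+y_j)\overline{w}_i = \overline{w}_i$ worth of weight reduction beyond the T4 part. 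The second solution $(\emptyset,\{j\})$ is feasible as in cases B/D.

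\textbf{Step 3: The approximation ratio.} Let $\text{LP}^*$ be the LP optimum; since the LP is a relaxation of (iK-ds), $\text{LP}^* \ge \text{iK-ds}^*$. In cases A/C the returned solution equals $\text{LP}^*$, so the ratio is $1$. In cases B/D, writing $P_4$ for the profit of the T4 part and $p_i$ for the (fractional-weighted but profit-counted) contribution of the leftover item, the LP value is at most $P_4 + p_i$ (bounding $x_i\le 1$), so $\max\{P_4, p_i\} \ge \text{LP}^*/2 \ge \text{iK-ds}^*/2$. In case E the LP value is at most $P_4 + p_i + p_j$, and since $i$ appears in improved form in the first returned solution alongside the T4 items, the two solutions have profits $\ge P_4 + p_i$ and $\ge p_j$ respectively (using $x_i, x_j \le 1$), whose sum is at least $\text{LP}^*$, so the better one is a $2$-approximation. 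Combining all cases, $\max$ of the returned values $\ge \text{iK-ds}^*/2$. Finally, the running time is dominated by solving the LP; all the post-processing (reading off item types, forming at most three candidate solutions, taking the max) is linear, so the algorithm runs in the time required to solve an LP.

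\textbf{Main obstacle.} The delicate point is Step~1, specifically ruling out type-T1 items in case E and correctly tallying the number of tight item-constraints that the non-T4 items can supply once the global constraints \eqref{k-cr} and \eqref{q-cr} are set aside; the counting must account for the fact that a basic solution needs linearly independent tight constraints, not merely $2n$ tight constraints. Getting the bookkeeping exactly right — that at most two items are non-integral, and that in the two-non-integral case both are T2/T3 with $y_i+y_j=1$ — is what makes the $2$-approximation (rather than $3$) go through, and is where I would spend the most care.
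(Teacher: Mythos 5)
Your proposal follows the paper's argument essentially verbatim: the same LP relaxation, the same count of $2n$ linearly independent tight constraints with the item typing T1--T4, the same case split A--E according to whether the cardinality constraint is tight, and the same candidate solutions whose profits sum to at least the LP optimum. The paper presents this reasoning in the text preceding the theorem rather than in a formal proof environment, and your write-up matches it while supplying a few details the paper leaves implicit (e.g., why the two non-integral items in case E cannot be of type T1, and why $y_i+y_j=1$ there).
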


\section{Extensions and Conclusion}
\label{sec:conclusion}

We introduced the improvable knapsack problem, where a separate budget is available to improve the weights of items. While network improvement problems have been thoroughly studied, this is the first such approach to knapsacks.

The previous results can also be applied to more general improvable knapsack problems, i.e., when also profit improvements are included. We briefly review these cases in the following.
\begin{itemize}
\item The single-level continuous profit improvement case can be modeled with the following mixed-integer program:
\begin{align*}
\max\ &\sum_{i=1}^n p_i x_i + \sum_{i=1}^n \overline{p}_i z_i \\
\text{s.t. } & \sum_{i=1}^n w_i x_i \le B \\
&\sum_{i=1}^n d_i z_i \le D \\
& z_i \le x_i & \forall i\in\{1,\ldots,n\} \\
& z_i \ge 0, x_i \in\{0,1\} & \forall i\in\{1,\ldots,n\}
\end{align*} 
Here, variables $z$ are used to model profit improvement of items.
Note that the structure of this problem is very similar to the single-level continuous weight improvement case: As before, an optimal choice for the improvements $z$ can be found by sorting the items by $d_i/\overline{p}_i$ if the variables $x$ are fixed. Using the same arguments as in Section~\ref{sec:contweights}, there exist a critical item index $k$ also for profit improvements. If $k$ is the critical item, $x$ a feasible solution, then we can find its profit by calculating
\begin{align*}
\sum_{i=1}^n p_i x_i + \sum_{i=1}^n \overline{p}_i z_i &= \sum_{i=1}^n p_i x_i + \sum_{i=1}^{k-1} \overline{p}_i z_i + \overline{p}_k z_k \\
&= \sum_{i=1}^{k-1} (p_i + \overline{p}_i) x_i + \sum_{i=k}^n p_i x_i + \overline{p}_k z_k \\
&= \sum_{i=1}^{k-1} (p_i + \overline{p}_i) x_i + \sum_{i=k}^n p_i x_i + \overline{p}_k \frac{D-\sum_{i=1}^{k-1} d_i x_i}{d_k} \\
&= \sum_{i=1}^{k-1} (p_i + \overline{p}_i - \overline{p}_k\frac{d_i}{d_k}) x_i + p_i + \sum_{i=k+1}^n p_i x_i + D\frac{\overline{p}_k}{d_k}
\end{align*}
Thus, solving $n$ problems similar to $P(k)$ also suffices to find an optimal solution, which gives us a PTAS for this problem.

\item For combined models of the form
\begin{align*}
\max\ &\sum_{i=1}^n p_i x_i + \sum_{i=1}^n \overline{p}_i z_i \\
\text{s.t. } & \sum_{i=1}^n w_i x_i - \sum_{i=1}^n \overline{w}_i y_i \le B \\
&\sum_{i=1}^n c_i y_i \le C \\
&\sum_{i=1}^n d_i z_i \le D \\
& y_i \le x_i & \forall i\in\{1,\ldots,n\} \\
& z_i \le x_i & \forall i\in\{1,\ldots,n\} \\
& y_i, z_i \ge 0, x_i \in\{0,1\} & \forall i\in\{1,\ldots,n\}
\end{align*} 
it is possible again to find optimal values for $y$ and $z$ for fixed variables $x$ by sorting the items by $c_i/\overline{w}_i$ and $d_i/\overline{p}_i$, respectively. This results in critical items for both profit and weight. In this case, a solution approach might consider all $n^2$ possible index combinations for profit and weight. In future research, this approach needs to be considered in detail.

\item For discrete improvements in both the profits and the weights the dynamic program from Section~\ref{sec:dp} can be immediately extended, leading to a PTAS for the general case.
\end{itemize}
More further research includes the analysis of improvable knapsack problems with a combined improvement budget for profit and weight improvement, as well as the extension to related combinatorial optimization problems, such as shortest paths.

Finally, improvable problems also play a role when computing the {\it query competitiveness} of an algorithm for an uncertain problem that allows queries to improve the problem knowledge (see \cite{sea}). It remains open how an algorithm for the improvable problem may be extended to a competitive algorithm for the uncertain problem.

\bibliography{references}
\bibliographystyle{alpha}

\end{document}